\DeclareSymbolFont{AMSb}{U}{msb}{m}{n}
\DeclareMathSymbol{\N}{\mathbin}{AMSb}{"4E}
\DeclareMathSymbol{\Z}{\mathbin}{AMSb}{"5A}
\DeclareMathSymbol{\R}{\mathbin}{AMSb}{"52}
\DeclareMathSymbol{\Q}{\mathbin}{AMSb}{"51}
\DeclareMathSymbol{\I}{\mathbin}{AMSb}{"49}
\DeclareMathSymbol{\C}{\mathbin}{AMSb}{"43}
\newcommand{\dbl}{[\hspace{-0.2ex}[}
\newcommand{\dbr}{]\hspace{-0.2ex}]}
\newcommand{\db}[1]{\dbl {#1} \dbr}
\newcommand{\iso}{\cong}
\newcommand{\invlim}{\underleftarrow{\textnormal{lim}}\,}
\newcommand{\dirlim}{\underrightarrow{\textnormal{lim}}\,}
\newcommand{\ctens}{\widehat{\otimes}}
\newcommand{\onto}{\twoheadrightarrow}
\newcommand{\Hom}{\textnormal{Hom}}
\newcommand{\rad}{\textnormal{rad}}
\newcommand{\SL}{\textnormal{SL}}
\newcommand{\dash}{\textnormal{-}}
\newcommand{\tn}[1]{\textnormal{#1}}
\newcommand{\cat}[1]{\tn{\textbf{#1}}}
\newcommand{\F}{\mathbb{F}}
\newcommand{\bigO}{\mathcal{O}}
\newcommand{\dbsub}[1]{_{{}_{#1}}}
\numberwithin{equation}{section}
\begin{document}

\author{John MacQuarrie\footnote{Research supported by a Heilbronn Research Fellowship.}, Peter Symonds}

\title{Brauer Theory for Profinite Groups}

%-----------------------------------------

\maketitle

\newtheorem{defn}[equation]{Def{i}nition}
\newtheorem{prop}[equation]{Proposition}
\newtheorem{lemma}[equation]{Lemma}
\newtheorem{theorem}[equation]{Theorem}
\newtheorem{corol}[equation]{Corollary}
\newtheorem{question}[equation]{Questions}

\maketitle

\section{Introduction}

Let $G$ be a finite group and $(K,\bigO,k)$ a $p$-modular system.  The subject known today as Brauer Theory is concerned with modular representations of $G$ and how they relate to ordinary representations.  Since the principal concern is with composition factors of a module, the language of Grothendieck groups has proved valuable.  Here, we discuss in a functorial manner what Serre \cite{serreRepsFiniteGroups} calls ``the $cde$ triangle''
$$
\xymatrix{
P_k(G) \ar[dr]_{e} \ar[rr]^{c} && R_k(G)  \\
& R_K(G)\ar[ur]_d &}
$$
wherein $R_K(G), R_k(G)$ are the Grothendieck groups of finitely generated $KG$-modules and $kG$-modules respectively, $P_k(G)$ is the Grothendieck group of finitely generated projective $kG$-modules, $c$ is the Cartan homomorphism and $d$ is the decomposition map.

\medskip

For appropriate coefficient rings $A$, we will note that $R_A(-)$ and $P_A(-)$ are functors.  From this perspective, $d$ is a natural transformation from $R_K(-)$ to $R_k(-)$.  While $e$ is a natural map from $P_{\bigO}(-)$ to $P_K(-)$, it cannot naturally be thought to have codomain $R_K(-)$, and hence the maps $e$ and $d$ are not naturally composable.  Using the functors and natural transformations discussed above, we extend $d$ and $e$ to profinite groups.

In Section \ref{SECTION characters} we discuss characters for profinite groups, and in Section \ref{SECTION blocks} we note that as with finite groups, the representation theory of $A\db{G}$ splits up into blocks.  Finally, in Section \ref{SECTION Cartan} we give a method that computes the Cartan matrix of a finite group $G$ in terms of the Cartan matrix of $G/U$ for a normal $p$-subgroup $U$ of $G$.  This allows one to give a closed formula for the Cartan matrix of groups in a cofinal system of finite quotients of an analytic pro-$p$ group.

\section{Preliminaries}

We collect here some basic results about $p$-modular systems and the module categories that will interest us.

\subsection{$p$-modular systems}

Fix once and for all a prime $p$ and an algebraic closure $\overline{\Q}_p$ of the field $\Q_p$ of $p$-adic rationals.  With respect to this closure, we define the category of $p$-modular systems as follows.  The objects are triples $(K,\bigO,k)$, where $K$ is a finite subextension of $\Q_p$, $\bigO$ is the valuation ring of $K$ (a finite extension of $\Z_p$), and $k$ is the residue field of $\bigO$ (a finite extension of $\F_p$).  Given objects $(K,\bigO,k)$ and $(K',\bigO',k')$, an inclusion $K\hookrightarrow K'$ induces maps $\bigO\to \bigO'$ and $k\to k'$, and the collection of these triples of maps form the morphisms of our category.  With respect to the obvious ordering $(K,\bigO,k)\leqslant (K',\bigO',k)$ whenever $K\leqslant K'$, this category forms an upwards directed set.  A $p$-modular system $(K,\bigO,k)$ is completely determined by $K$, and we frequently use this observation to suppress notation.

Given a finite group $G$, recall that a field $A$ is said to be a \emph{splitting field} for $G$ if every irreducible $AG$-module is absolutely irreducible.  We can always find a finite extension $K$ of $\Q_p$ such that $K$ is a splitting field for $G$ and all its subgroups, in which case $K$ is sometimes said to be ``sufficiently large'' for $G$.   If $K$ is sufficiently large for $G$, then so is $k$.  For proofs of all these facts, see \cite[\S 17A]{curtisandreiner1}.

\subsection{The rings $k\db{G}, \bigO\db{G}, K\db{G}$ and their modules}

Let $G$ be a profinite group.  The coefficient rings $k$ and $\bigO$ (being finite extensions of $\F_p, \Z_p$ respectively) are profinite, so that for $A\in \{\bigO,k\}$ the algebra
$$A\db{G} := \invlim_{N\lhd_O G}A[G/N]$$
is profinite (see \cite[\S 5.3]{ribzal} for more details).  We are interested in the category $A\db{G}\dash\cat{mod}$ of finitely generated profinite left $A\db{G}$-modules.  The category $k\db{G}\dash\cat{mod}$ is the object of study in the modular representation theory of profinite groups, considered in \cite{macquarrierelproj} and \cite{macquarrieGreenCorrespondence}. Both categories have enough projectives.

\medskip

It is not the case that $K\otimes_{\bigO}\bigO\db{G} \iso \invlim_{N\lhd_O G}\, K\otimes_{\bigO}\bigO[G/N]$.  We will work with the latter, so define
$$K\db{G}:=\invlim_{N\lhd_O G}\,K[G/N].$$

Denote by $K\db{G}\dash\cat{mod}$ the category of Artin $K\db{G}$-modules.  That is, the category of finite dimensional $K\db{G}$-modules having an open normal subgroup of $G$ in the kernel of their action.  Indecomposable objects in this category are irreducible Artin modules.

\section{Grothendieck groups}\label{SECTION grothendieck groups}

In this section we first give a careful analysis of various Grothendieck groups associated to a finite group $G$, describing functorial relationships between them.  We then describe a natural generalization to the profinite setting.

\subsection{Grothendieck groups of finite groups}

Let $(K,\bigO,k)$ be a $p$-modular system as above.  For a finite group $G$ and $A\in \{K,k\}$, let $R_A(G)$ denote the Grothendieck group of finitely generated $AG$-modules.  By the Krull-Schmidt Theorem, this is the free abelian group with basis the isomorphism classes of irreducible $AG$-modules.  Denote the image of an $AG$-module $X$ in $R_A(G)$ by $[X]$.  Given a group homomorphism $\alpha:G\to H$, we define a group homomorphism $R_A(\alpha):R_A(H)\to R_A(G)$ on an $AH$-module $V$ by $[V]\mapsto[{}^{\alpha}V]$, where ${}^{\alpha}V$ is the vector space $V$ with action from $G$ given by $g\cdot v:=\alpha(g)v$ (``the module $V$ restricted to $G$'').  In this way, we regard $R_A(-)$ as a contravariant functor from finite groups to abelian groups.

For $A\in \{K,\bigO,k\}$, let $P_A(G)$ denote the Grothendieck group of finitely generated projective $AG$-modules.  Again, we are entitled to think of $P_A(G)$ as the free abelian group with basis the isomorphism classes of indecomposable projective $AG$-modules.  We have an isomorphism of groups $P_K(G)\iso R_K(G)$, but the functors will be rather different.  Given a group homomorphism $\alpha:G\to H$, define $P_A(\alpha):P_A(G)\to P_A(H)$ on a projective $AG$-module $P$ by $[P]\mapsto [AH^{\alpha}\otimes_{AG}P]$, where $AH^{\alpha}$ is the $(AH\dash AG)$-bimodule whose action from $G$ is given by $x\cdot g:=x\alpha(g)$.  In this way, we regard $P_A(-)$ as a covariant functor from finite groups to abelian groups.

\medskip

Just as above, we can define functors $R_{\overline{\Q}_p}(-), R_{\overline{\F}_p}(-), P_{\overline{\Q}_p}(-)$ and $P_{\overline{\F}_p}(-)$.  For $A$ a field, $R_A(-)$ and $P_A(-)$ commute with scalar extension (as one can see by doing simple checks and using results in \cite[\S 14.6]{serreRepsFiniteGroups}), and hence the functors defined over algebraic closures are equal to the direct limits of the corresponding functors as the coefficient rings vary over finite extensions of the ground field.  The ring $\dirlim \bigO$ is not a discrete valuation ring, so to avoid unnecessary technicalities with modules over $\overline{\bigO}$, we simply \emph{define}
$$P_{\overline{\bigO}}(-):=\dirlim P_{\bigO}(-)$$
as $\bigO$ ranges through the valuation rings of finite extensions of $\Q_p$ in the obvious way.

\medskip

Let $F$ be a functor from a category $\mathcal{C}$ to topological abelian groups.  We denote by $F(-)^{*}$ the composition of $F$ with the functor $\Hom_{\Z}(-,\Z)$ from topological abelian groups to itself that sends an object $X$ to the group of continuous homomorphisms from $X$ to the discrete group $\Z$.  The topology on $X^*$ is the compact-open topology.  For the functors $F$ that will concern us, $(F(-)^*)^*$ will be naturally isomorphic to $F$, so that $(-)^*$ is a duality.

As in \cite[\S 14.5]{serreRepsFiniteGroups} for $A\in \{K,k\}$ we define the pairing
$$\langle -,-\rangle_G :P_A(G)\times R_A(G)\to \Z$$
on modules by
$$\langle [P],[V]\rangle_G := \dim_A \Hom_{AG}(P,V)$$
and extending bilinearly.

\begin{prop}
For $A\in \{K,k\}$ the maps $\eta\dbsub{A,G}:P_A(G)\to R_A(G)^*$ given by $P\mapsto \langle P,-\rangle\dbsub{G}$ are the components of a natural transformation $\eta\dbsub{A}:P_A(-)\to R_A(-)^*$. The component $\eta\dbsub{A,G}$ is an isomorphism if and only if $A$ is a splitting field for $G$.
\end{prop}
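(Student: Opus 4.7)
The proof naturally splits into two parts. For naturality of $\eta_A$ in $G$, given a homomorphism $\alpha:G\to H$ I would check that the square
$$\xymatrix{P_A(G) \ar[r]^{\eta_{A,G}} \ar[d]_{P_A(\alpha)} & R_A(G)^* \ar[d]^{R_A(\alpha)^*} \\ P_A(H) \ar[r]_{\eta_{A,H}} & R_A(H)^*}$$
commutes. Chasing a class $[P]\in P_A(G)$ around the square and testing the two resulting functionals on an arbitrary $[V]\in R_A(H)$ reduces the claim to the equality
$$\dim_A \Hom_{AH}(AH^\alpha \otimes_{AG} P,\, V) \;=\; \dim_A \Hom_{AG}(P, {}^\alpha V),$$
which is a direct consequence of the tensor--hom adjunction for the $(AH,AG)$-bimodule $AH^\alpha$. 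This step is essentially formal.

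For the characterization of when $\eta_{A,G}$ is an isomorphism, I would work with explicit bases. Let $S_1,\ldots,S_r$ be representatives of the isomorphism classes of irreducible $AG$-modules, and let $P_i$ be the indecomposable projective associated to $S_i$: the projective cover of $S_i$ when $A=k$, and $S_i$ itself when $A=K$ (where $KG$ is semisimple). Then $\{[P_i]\}$ is a $\Z$-basis of $P_A(G)$, $\{[S_j]\}$ a $\Z$-basis of $R_A(G)$, and the dual basis $\{\phi_j\}$ (with $\phi_j([S_i])=\delta_{ij}$) is a $\Z$-basis of $R_A(G)^*$. Combining Schur's lemma with the observation that $\Hom_{AG}(P_i,-)$ sees only the $S_i$-isotypic part of the head yields
$$\langle [P_i],[S_j]\rangle_G \;=\; \delta_{ij}\,\dim_A \tn{End}_{AG}(S_i),$$
so $\eta_{A,G}$ is represented, with respect to these bases, by the diagonal matrix with entries $\dim_A \tn{End}_{AG}(S_i)$.

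This diagonal matrix is invertible over $\Z$ if and only if every diagonal entry equals $1$, i.e.\ $\tn{End}_{AG}(S_i)=A$ for every $i$, which is precisely the statement that every irreducible $AG$-module is absolutely irreducible, i.e.\ that $A$ is a splitting field for $G$. Nothing in the argument is genuinely difficult; the point requiring mild care is to insist on invertibility over $\Z$ rather than $\Q$, and to treat the two cases $A\in\{K,k\}$ uniformly by using projective covers in the modular case and semisimplicity in the characteristic-zero case.
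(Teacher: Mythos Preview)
Your proof is correct and follows essentially the same approach as the paper. The naturality argument is identical (tensor--hom adjunction for the bimodule $AH^{\alpha}$), and for the isomorphism criterion your explicit diagonal-matrix computation unpacks exactly what the paper states more tersely: the paper observes that $\eta\dbsub{A,G}$ is always injective and hits the dual basis vector $\rho_{S_i}$ if and only if $S_i$ is absolutely irreducible, citing \cite[29.13]{curtisandreinerOld}, whereas you compute $\eta\dbsub{A,G}([P_i]) = \dim_A\tn{End}_{AG}(S_i)\cdot\phi_i$ directly and invoke the characterization of absolute irreducibility via the endomorphism ring.
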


\begin{proof}
That  $\eta_A$ is natural amounts to the assertion that for $\alpha:G\to H$ a group homomorphism, $P$ a projective $AG$-module and $V$ an $AH$-module, we have an isomorphism of vector spaces
$$\Hom_{AH}(AH^{\alpha}\otimes_{AG}P, V)\iso \Hom_{AG}(P,{}^{\alpha}V).$$
We have a natural isomorphism $^{\alpha}V\iso \Hom_{AH}(AH^{\alpha},V)$, so the required isomorphism is a tensor-hom adjunction (for which, see eg. \cite[2.8.2]{benson}).

Each $\eta\dbsub{A,G}$ is injective, but maps onto the standard basis element $[\rho_X]\in R_A(G)^*$ ($\rho_X$ taking the irreducible $X$ to 1 and other irreducibles to 0) if and only if $X$ is absolutely irreducible, by \cite[29.13]{curtisandreinerOld}.
\end{proof}

Since the functors $P_A(-), R_A(-)$ commute with field extensions, we have isomorphisms of functors $P_{\overline{\F}_p}(-)\iso R_{\overline{\F}_p}(-)^*$ and $P_{\overline{\Q}_p}(-)\iso R_{\overline{\Q}_p}(-)^*$.  Note that as no finite field extension of the base-field is a splitting field for every group, we do not get functor isomorphisms for finite extensions of $\F_p$ or $\Q_p$.

\medskip

For any $p$-modular system $(K,\bigO,k)$ the functors $P_{\bigO}(-)$ and $P_k(-)$ are naturally isomorphic via $P\mapsto P/\mathfrak{p}P$, where $\mathfrak{p}$ is the maximal ideal of $\bigO$. We define a pairing $P_{\bigO}(-)\times R_{k}(-)\to \Z$ by sending the pair $(P,S)$ to $\dim_k\Hom_{kG}(P/\mathfrak{p}P,S)$.  As above, we see that the functors $P_{\overline{\bigO}}(-)$ and $R_{\overline{\F}_p}(-)^*$ are isomorphic.

\subsection{Grothendieck groups of profinite groups}

Now let $G= \invlim\dbsub{N\lhd_O G}G/N$ be a profinite group and let $(K,\bigO,k)$ be a $p$-modular system.  For $A\in \{K,k\}$, $R_A(-)$ is a contravariant functor, so the inverse system for $G$ induces a direct system of Grothendieck groups, and we define
$$\widehat{R}_A(G):=\dirlim_N R_A(G/N).$$
Similarly, for $A\in \{K,\bigO,k\}$, since $P_A(-)$ is a covariant functor we define
$$\widehat{P}_A(G):=\invlim_N P_A(G/N).$$
In this way we regard $\widehat{R}_A(-)$ as a contravariant functor from profinite groups to (discrete) topological abelian groups, and $\widehat{P}_A(-)$ as a covariant functor from profinite groups to topological abelian groups.

For $A\in \{\overline{\Q}_p,\overline{\F}_p\}$ the natural isomorphism $P_A(-)\iso R_A(-)^*$ from the previous section extends to a natural isomorphism $\widehat{P}_A(-)\iso \widehat{R}_A(-)^*$.  This isomorphism can be written explicitly in terms of pairings as follows.  Given a profinite group $G$ and modules $[P]\in \widehat{P}_A(G)$ and $[X]\in \widehat{R}_A(G)$, there is some $N_0\lhd_O G$ such that $[X]$ is the image of an element of $R_A(G/N_0)$ by basic properties of the direct limit.  Denoting by $[P_N]$ the image of $[P]$ in $P_A(G/N)$ we have that $\langle [P_N],[X]\rangle_{G/N}$ is independent of $N$ as $N$ ranges through those open normal subgroups of $G$ contained inside $N_0$.  We thus define
$$\langle [P],[X]\rangle_G:= \langle [P_{N_0}],[X]\rangle_{G/N_0}.$$
This pairing induces the required isomorphism $\widehat{P}_A(-)\iso \widehat{R}_A(-)^*$.  Similarly we can define a pairing that induces the natural isomorphism  $\widehat{P}_{\overline{\bigO}}(-)\iso \widehat{R}_{\overline{\F}_p}(-)^*$.

We give a more explicit description of the groups involved, as follows.

\begin{lemma}\label{grothendieck isomorphisms}
Let $A$ be $K$ or $k$.  We have isomorphisms
$$\widehat{R}_A(G)\iso \bigoplus_{V}\Z_{[V]}$$
and
$$\widehat{R}_A(G)^*\iso \prod_{V}\Z_{[V]}$$
where $V$ runs through the set of isomorphism classes of irreducible Artin $A\db{G}$-modules.
\end{lemma}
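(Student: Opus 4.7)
The plan is to compute the direct limit explicitly by showing that each map in the system $R_A(G/N)\to R_A(G/N')$ (for $N'\subseteq N$) sends basis elements to basis elements, and then to dualize.

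First I would record the key observation: for $N'\leqslant N$ open normal in $G$, inflation along the surjection $\pi:G/N'\onto G/N$ takes irreducible $A[G/N]$-modules to irreducible $A[G/N']$-modules. This is immediate, because any $A[G/N']$-submodule of $^{\pi}V$ is automatically an $A[G/N]$-submodule (the $A[G/N']$-action factors through $A[G/N]$), so proper $A[G/N']$-submodules correspond bijectively to proper $A[G/N]$-submodules.

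Since $A$ is a field and $G/N$ is finite, each $R_A(G/N)$ is the free abelian group on isomorphism classes of simple $A[G/N]$-modules. The transition map $R_A(\pi):R_A(G/N)\to R_A(G/N')$ sends the basis element $[V]$ to the basis element $[{}^{\pi}V]$ by the previous paragraph. Hence $\widehat{R}_A(G)=\dirlim_N R_A(G/N)$ is the free abelian group on equivalence classes of pairs $(N,[V])$ (with $[V]$ a simple $A[G/N]$-module) under the relation generated by $(N,[V])\sim (N',[{}^{\pi}V])$. Each such class corresponds to a unique isomorphism class of irreducible Artin $A\db{G}$-module: given a representative $(N,[V])$, inflate $V$ to obtain one; conversely, any irreducible Artin $A\db{G}$-module has action factoring through some $G/N_0$, yielding a representative, and two choices at levels $N_0, N_0'$ are compatible via $N_0\cap N_0'$. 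This yields the first isomorphism.

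For the dual, the colimit $\widehat{R}_A(G)\iso\bigoplus_V\Z_{[V]}$ inherits the discrete topology, being the colimit in topological abelian groups of discrete groups along basis-preserving injections. Every $\Z$-linear map from a discrete group to $\Z$ is continuous, so
\[
\widehat{R}_A(G)^{*}=\Hom_{\Z}\!\left(\bigoplus_V\Z_{[V]},\,\Z\right)\iso\prod_V\Z_{[V]}
\]
as abstract groups. The compact-open topology on the left agrees with the product topology on the right, because the compact subsets of a discrete group are precisely its finite subsets, so a sub-basic neighbourhood of $0$ in $\widehat{R}_A(G)^{*}$ constrains the values at only finitely many coordinates.

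The principal step requiring care is the identification of the colimit equivalence on pairs $(N,[V])$ with $A\db{G}$-module isomorphism of the corresponding Artin irreducibles; this is essentially bookkeeping, but relies squarely on the preservation of simplicity under inflation together with the fact that any two open normal subgroups contain a common one.
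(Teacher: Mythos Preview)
Your proof is correct and follows essentially the same approach as the paper's, which simply reads: ``The first isomorphism is easy and the second follows from the first by duality.'' You have carefully supplied the details the paper leaves implicit---that inflation along surjections of quotients preserves simplicity (so the transition maps send basis elements to basis elements), the bookkeeping identifying the colimit of bases with irreducible Artin modules, and the verification that the compact-open topology on the dual matches the product topology.
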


\begin{proof}
The first isomorphism is easy and the second follows from the first by duality.
\end{proof}

For $A\in \{\overline{\Q}_p, \overline{\F}_p\}$, the second isomorphism of Lemma \ref{grothendieck isomorphisms} and the functor isomorphisms above give the description of $\widehat{P}_A(G)$ as $\prod_{P}\Z_{[P]}$, where $P$ runs through the set of isomorphism classes of indecomposable projective $A\db{G}$-modules.  Note in particular that we do not have an isomorphism of groups between $\widehat{P}_K(G)$ and $\widehat{R}_K(G)$ when $G$ is an infinite profinite group.  We can express $\widehat{P}_{\overline{\bigO}}(G)$ as a product of copies of $\Z$ indexed by irreducible $\overline{\F}_p$-modules, by duality.

\section{The decomposition map and its dual}

We turn now to natural transformations between the functors discussed above.  Consider a finite group $G$ and fix a $p$-modular system $(K,\bigO,k)$.   Recall that the \emph{decomposition} map $d_{K,G}:R_K(G)\to R_k(G)$ is defined on the basis of $R_K(G)$ as follows.  Given an irreducible $KG$-module $V$, choose an $\bigO$-form $M$ for $V$ (that is, an $\bigO G$-lattice $M$ such that $K\otimes_{\bigO}M\iso V$).  Define $d_{K,G}([V]):=[M/\mathfrak{p}M]$, where $\mathfrak{p}$ is the maximal ideal of $\bigO$.  Although there may be non-isomorphic $\bigO$-forms $M$ of $V$, $[M/\mathfrak{p}M]$ is well-defined \cite[\S15.2]{serreRepsFiniteGroups}.

We also define a map $e_{K,G}:P_{\bigO}(G)\to P_K(G)$ by $e_{K,G}([P])=[K\otimes_{\bigO}P]$.

\begin{prop}
Let $(K,\bigO,k)$ be a $p$-modular system.  The map $d_{K,G}:R_K(G)\to R_k(G)$ given above is the component at $G$ of a natural transformation $d_K:R_K(-)\to R_k(-)$.  The map $e_{K,G}:P_{\bigO}(G)\to P_{K}(G)$ is the component at $G$ of a natural transformation $e_K:P_{\bigO}(-)\to P_K(-)$.
\end{prop}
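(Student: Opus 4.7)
The plan is to verify naturality of each transformation separately by chasing a basis element through the explicit definitions and reducing to standard identifications. Fix a homomorphism $\alpha\colon G\to H$ of finite groups.

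For $d_K$, I would start with an irreducible $KH$-module $V$ together with a chosen $\bigO$-form $M$ of $V$. Along one composite, $[V]\mapsto [M/\mathfrak{p}M]\mapsto [{}^{\alpha}(M/\mathfrak{p}M)]$. Along the other, $[V]\mapsto [{}^{\alpha}V]$, and then $d_{K,G}([{}^{\alpha}V])$ may be computed using any $\bigO$-form of ${}^{\alpha}V$; the natural choice is ${}^{\alpha}M$, which is an $\bigO G$-lattice (its underlying $\bigO$-module is just $M$, unchanged) and satisfies $K\otimes_{\bigO}{}^{\alpha}M\iso{}^{\alpha}V$. The resulting class is $[{}^{\alpha}M/\mathfrak{p}\cdot{}^{\alpha}M]$, and the canonical identification ${}^{\alpha}M/\mathfrak{p}\cdot{}^{\alpha}M\iso{}^{\alpha}(M/\mathfrak{p}M)$ of $kG$-modules completes the check. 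The one nontrivial ingredient is the well-definedness of $d_{K,G}$ on basis elements, which is the result already quoted from Serre.

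For $e_K$, I would take a projective $\bigO G$-module $P$ and evaluate both composites: one sends $[P]$ to $[K\otimes_{\bigO}(\bigO H^{\alpha}\otimes_{\bigO G}P)]$, the other to $[KH^{\alpha}\otimes_{KG}(K\otimes_{\bigO}P)]$. Both reduce to $[KH^{\alpha}\otimes_{\bigO G}P]$ by associativity of tensor products, using the isomorphism $K\otimes_{\bigO}\bigO H^{\alpha}\iso KH^{\alpha}$ of $(KH,\bigO G)$-bimodules on one side and $K\otimes_{\bigO}P\iso KG\otimes_{\bigO G}P$ on the other.

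The main obstacle is the usual bookkeeping: keeping the twisted bimodules $\bigO H^{\alpha}$ and $KH^{\alpha}$ straight through successive base changes, and confirming that restriction along $\alpha$ commutes with reduction modulo $\mathfrak{p}$ at the $\bigO$-lattice level. Neither step requires more than elementary properties of tensor products and the well-definedness fact cited above, so I expect no serious difficulty.
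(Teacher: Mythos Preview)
Your proposal is correct and follows essentially the same approach as the paper: for $d_K$ the paper reduces to checking $[M_{{}^{\alpha}V}/\mathfrak{p}M_{{}^{\alpha}V}] = [{}^{\alpha}(M_V/\mathfrak{p}M_V)]$ using exactness of ${}^{\alpha}(-)$, and for $e_K$ it states exactly the tensor identity $K\otimes_{\bigO}\bigO H^{\alpha}\otimes_{\bigO G}P \iso KH^{\alpha}\otimes_{KG}K\otimes_{\bigO}P$ that you derive. One small remark: when you say ``$d_{K,G}([{}^{\alpha}V])$ may be computed using any $\bigO$-form of ${}^{\alpha}V$'', note that ${}^{\alpha}V$ need not be irreducible, so you are implicitly invoking well-definedness for arbitrary $KG$-modules rather than just basis elements; this is a trivial extension of the cited result (and the paper's own proof relies on the same implicit step), but your final sentence slightly understates what is used.
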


\begin{proof}
Let $\alpha:G\to H$ be a group homomorphism.  After writing out the square, the first claim amounts to the assertion that
$$[M_{{}^{\alpha}V}/\mathfrak{p}M_{{}^{\alpha}V}] = [{}^{\alpha}(M_V/\mathfrak{p}M_V)],$$
where $M_X$ denotes an $\bigO$-form for $X$.  This is easily checked, making use of the exactness of ${}^{\alpha}(-)$.

The second claim is the assertion that for a projective $\bigO G$-module $P$,
$$K\otimes_{\bigO}\bigO H^{\alpha}\otimes_{\bigO G}P \iso KH^{\alpha}\otimes_{KG}K\otimes_{\bigO}P,$$
which is also easily checked.
\end{proof}

Note the following important remark.  For a finite group $G$, $e_{K,G}$ is usually thought to have codomain $R_K(G)$ (as in the diagram in the introduction).  This allows for the composition $d\circ e: P_k(G)\to R_K(G)\to R_k(G)$ known as the Cartan homomorphism.  However, even for finite groups $e_K$ is not a natural transformation between the functors $P_{\bigO}(-)$ and $R_K(-)$, but only between $P_{\bigO}(-)$ and $R_K(-)^*$.  It follows that $e$ and $d$ are not naturally composable.

\medskip

Now let $G = \invlim\dbsub{N\lhd_O G}G/N$ be a profinite group.  The following is immediate from the naturality of $d_K$ and $e_K$.

\begin{prop}
The maps $d_{K,G/N}$ define a map of direct systems
$$\{R_K(G/N)\,|\,N\lhd_O G\}\to \{R_k(G/N)\,|\,N\lhd_O G\},$$
and hence a map $\widehat{d}_{K,G}:\widehat{R}_K(G)\to \widehat{R}_k(G)$.  This map is the component at $G$ of a natural transformation $\widehat{d}_K:\widehat{R}_K(-)\to \widehat{R}_k(-)$.

\medskip

Likewise, the maps $e_{K,G/N}$ define a map of inverse systems
and hence a map $\widehat{e}_{K,G}:\widehat{P}_{\bigO}(G)\to \widehat{P}_K(G)$.  This map is the component at $G$ of a natural transformation $\widehat{e}_K:\widehat{P}_{\bigO}(-)\to \widehat{P}_K(-)$.
\end{prop}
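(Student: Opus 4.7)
The proposition reduces to repeated application of the naturality statements for $d_K$ and $e_K$ already established for finite groups, together with the formalism of limits in the category of topological abelian groups. I therefore plan to treat the two halves in parallel.

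For the decomposition half, I would start by fixing open normal subgroups $N' \subseteq N$ of $G$ and observing that the canonical quotient $q_{N,N'}:G/N' \onto G/N$ is a group homomorphism between finite groups. Applying the previous proposition (naturality of $d_K$) to $q_{N,N'}$ produces a commutative square whose horizontal arrows are $d_{K,G/N'}$ and $d_{K,G/N}$ and whose vertical arrows are $R_K(q_{N,N'})$ and $R_k(q_{N,N'})$. Since the direct system defining $\widehat{R}_A(G)$ is precisely $\{R_A(G/N),\,R_A(q_{N,N'})\}$ indexed by the downward-directed set of open normal subgroups, this commutativity says exactly that the $d_{K,G/N}$ form a morphism of direct systems, and passing to the direct limit yields $\widehat{d}_{K,G}:\widehat{R}_K(G)\to \widehat{R}_k(G)$ as claimed.

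For naturality in $G$, let $\alpha:G\to H$ be a continuous homomorphism of profinite groups. For each $M \lhd_O H$ the subgroup $\alpha^{-1}(M)$ is open and normal in $G$, and $\alpha$ descends to a homomorphism $\alpha_M:G/\alpha^{-1}(M)\to H/M$ of finite groups. Naturality of $d_K$ at each $\alpha_M$ gives a commutative square, and these squares are compatible with the structure maps of the two direct systems as $M$ varies, because $\alpha_{M'}$ and $\alpha_M$ commute with the quotients $q_{M,M'}$. Taking the direct limit in $M$ (which is cofinal in the system of open normal subgroups of $G$ pulled back through $\alpha$, by the standard argument for profinite groups) produces the required square
\[
\begin{CD}
\widehat{R}_K(H) @>\widehat{d}_{K,H}>> \widehat{R}_k(H) \\
@VVV @VVV \\
\widehat{R}_K(G) @>\widehat{d}_{K,G}>> \widehat{R}_k(G).
\end{CD}
\]

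The argument for $\widehat{e}_K$ is formally identical but with arrows reversed, since $P_A(-)$ is covariant. Here naturality of $e_K$ at $q_{N,N'}$ shows that $\{e_{K,G/N}\}$ is a morphism of inverse systems $\{P_\bigO(G/N)\} \to \{P_K(G/N)\}$, and the inverse limit in the category of topological abelian groups yields $\widehat{e}_{K,G}$; naturality in $G$ follows from the same cofinality argument applied to inverse limits. The only subtlety worth flagging is the topological one: one should note that $\widehat{P}_A(G)$ is naturally a topological group as an inverse limit of discrete groups, and that the inverse limit of the $e_{K,G/N}$ is then automatically continuous, so $\widehat{e}_K$ really does land in the category of topological abelian groups rather than merely abelian groups. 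There is no genuine obstacle in the proof; the entire content is bookkeeping with limits and the naturality statements already in hand.
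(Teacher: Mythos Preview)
Your proposal is correct and is exactly the unpacking of what the paper intends when it declares the result ``immediate from the naturality of $d_K$ and $e_K$'' (the paper gives no further argument). One small correction: your parenthetical cofinality claim is false in general (take $\alpha$ trivial, so every $\alpha^{-1}(M)=G$) and is in any case unnecessary---the map $\widehat{R}_A(\alpha)$ is defined via the canonical map $\dirlim_M R_A(G/\alpha^{-1}(M))\to \dirlim_N R_A(G/N)=\widehat{R}_A(G)$, and the finite-level naturality squares for the $\alpha_M$ pass to the limit without any cofinality hypothesis.
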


The natural transformation $\widehat{d}_{\overline{\Q}_p}:\widehat{R}_{\overline{\Q}_p}(-)\to \widehat{R}_{\overline{\F}_p}(-)$ induces a natural transformation $\widehat{d}_{\overline{\Q}_p}^*:\widehat{R}_{\overline{\F}_p}(-)^*\to \widehat{R}_{\overline{\Q}_p}(-)^*$, which can naturally be thought of as a natural transformation $\widehat{P}_{\overline{\bigO}}(-)\to \widehat{P}_{\overline{\Q}_p}(-)$ via the isomorphisms in Section \ref{SECTION grothendieck groups}.

\begin{theorem}
The natural transformations $\widehat{e}_K$ and $\widehat{d}_K^*$ commute with the natural maps $\widehat{P}_{\bigO}(-)\to \widehat{R}_k^*(-)$ and $\widehat{P}_{K}(-)\to \widehat{R}_K^*(-)$ in the sense that the following squares commute:
$$
\xymatrix{
\widehat{P}_{\bigO}(G) \ar[r]^{\widehat{e}_{K,G}}\ar[d]     & \widehat{P}_K(G)\ar[d] \\
\widehat{R}_k(G)^* \ar[r]_{\widehat{d}_{K,G}^*}             & \widehat{R}_K(G)^*.}
$$
\end{theorem}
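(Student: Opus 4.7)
The plan is to reduce the claim to the analogous commutative square for finite quotients of $G$, and then to deduce the finite case by identifying both compositions with the $\bigO$-rank of a single $\Hom$-lattice.

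Fix $[P]\in\widehat{P}_{\bigO}(G)$ and $[V]\in\widehat{R}_K(G)$; by Lemma \ref{grothendieck isomorphisms} and the construction of the pairings in Section \ref{SECTION grothendieck groups}, it suffices to show that the two compositions, evaluated on such a $[V]$, yield the same integer. Choose $N_0\lhd_O G$ such that $[V]$ is the image of some $[V_0]\in R_K(G/N_0)$, and let $[P_0]\in P_{\bigO}(G/N_0)$ denote the image of $[P]$. Unwinding the definitions of $\widehat{e}_K$, $\widehat{d}_K$ and the two vertical maps, both compositions evaluated at $[V]$ are determined by $[P_0]$ and $[V_0]$ and are computed by the corresponding finite-group formulas for $H := G/N_0$. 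The claim therefore reduces to showing, for a finite group $H$, a finitely generated projective $\bigO H$-module $Q$ and an irreducible $KH$-module $W$ with $\bigO$-form $M$, that
$$
\dim_K \Hom_{KH}(K\otimes_{\bigO}Q,\, W) \;=\; \dim_k\Hom_{kH}(Q/\mathfrak{p}Q,\, M/\mathfrak{p}M).
$$

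To prove this finite equality, set $L := \Hom_{\bigO H}(Q, M)$. Since $Q$ is finitely generated projective, it is $\bigO$-free of finite rank; $M$ is $\bigO$-free by definition, so $\Hom_{\bigO}(Q, M)$ is a free $\bigO$-module of finite rank, and its $\bigO$-submodule $L$ is then also free of finite rank, as $\bigO$ is a discrete valuation ring. Writing $Q$ as a direct summand of some $(\bigO H)^n$, the natural maps $K\otimes_{\bigO}L\to\Hom_{KH}(K\otimes_{\bigO}Q,W)$ and $k\otimes_{\bigO}L\to\Hom_{kH}(Q/\mathfrak{p}Q,M/\mathfrak{p}M)$ realise as direct summands of the obvious isomorphisms $K\otimes_{\bigO}M^n\iso W^n$ and $k\otimes_{\bigO}M^n\iso (M/\mathfrak{p}M)^n$, and are therefore themselves isomorphisms. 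Both sides of the displayed equation then equal $\tn{rank}_{\bigO}L$.

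The only subtle point in the reduction step is verifying that the profinite pairings really are computed at the finite level $H=G/N_0$ independently of the choice of $N_0$; this, however, is precisely the independence statement already established in Section \ref{SECTION grothendieck groups} for the pairing $\langle[P],[X]\rangle_G$. Once that compatibility is in place, the only remaining content is the $\tn{rank}_{\bigO}L$ identity above, and no serious obstacle is anticipated.
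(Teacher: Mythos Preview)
Your proof is correct and follows essentially the same approach as the paper: reduce to finite quotients via the limit construction, then invoke the finite-group identity $\langle [P], [M_V/\mathfrak{p}M_V]\rangle = \langle [K\otimes_{\bigO}P],[V]\rangle$. The only difference is that the paper obtains the finite case by citing \cite[\S 15.4(b)]{serreRepsFiniteGroups}, whereas you supply a direct proof of it via $\tn{rank}_{\bigO}\Hom_{\bigO H}(Q,M)$; your argument here is standard and correct (projectivity of $Q$ makes the base-change maps on $\Hom$ into isomorphisms by reducing to the free case).
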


\begin{proof}
This is immediate from the finite theory.  For a finite group $G$ we are asserting that given a projective $\bigO G$-module $P$ and finitely generated $KG$-module $V$ with $\bigO$-form $M_V$, we have the equality
$$\langle [P], [M_V/\mathfrak{p}M_V]\rangle_G = \langle [K\otimes_{\bigO}P],[V]\rangle_G,$$
which is precisely \cite[\S 15.4(b)]{serreRepsFiniteGroups}.  For profinite $G$ just take limits.
\end{proof}

\begin{corol}
Having naturally identified $\widehat{P}_{\overline{\bigO}}(-)$ with $\widehat{R}_{\overline{\F}_p}^*(-)$ and $\widehat{P}_{\overline{\Q}_p}(-)$ with $\widehat{R}_{\overline{\Q}_p}^*(-)$ as above, the natural transformations $\widehat{e}_{\overline{\Q}_p}$ and $\widehat{d}_{\overline{\Q}_p}^*$ are equal.  Thus $e_{\overline{\Q}_p}$ and $d_{\overline{\Q}_p}$ are dual.
\end{corol}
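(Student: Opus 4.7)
The plan is to deduce this directly from the commutative square of the preceding Theorem by passing to algebraic closures: over $\overline{\Q}_p$ and $\overline{\F}_p$ the two vertical maps in that square become exactly the natural isomorphisms used as the identifications in the corollary, so commutativity collapses to equality of the horizontal arrows.

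First I would write down the Theorem for each finite $p$-modular system $(K,\bigO,k)\subseteq\overline{\Q}_p$ and take the direct limit over such systems. All four functors are compatible with this limit: for $\widehat{P}_{\overline{\bigO}}(-)$ this is the definition, while for $\widehat{R}_{\overline{\Q}_p}(-)^{*}$ and $\widehat{R}_{\overline{\F}_p}(-)^{*}$ it follows because $R_A(-)$ and $P_A(-)$ commute with scalar extension over fields, and $\Hom_{\Z}(-,\Z)$ carries the direct limit of discrete groups on the $R$-side to the inverse limit that appears on the dual side. The outcome is the commutative square
$$
\xymatrix{
\widehat{P}_{\overline{\bigO}}(G) \ar[r]^{\widehat{e}_{\overline{\Q}_p,G}}\ar[d] & \widehat{P}_{\overline{\Q}_p}(G)\ar[d] \\
\widehat{R}_{\overline{\F}_p}(G)^{*} \ar[r]_{\widehat{d}_{\overline{\Q}_p,G}^{*}} & \widehat{R}_{\overline{\Q}_p}(G)^{*}.
}
$$

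Second, I would observe that the two vertical maps in this limiting square are precisely the natural isomorphisms $\widehat{P}_{\overline{\bigO}}(-)\iso \widehat{R}_{\overline{\F}_p}^{*}(-)$ and $\widehat{P}_{\overline{\Q}_p}(-)\iso \widehat{R}_{\overline{\Q}_p}^{*}(-)$ recorded in Section \ref{SECTION grothendieck groups}. These exist because every irreducible Artin $\overline{\Q}_p\db{G}$- or $\overline{\F}_p\db{G}$-module factors through some finite quotient and is already defined over a finite extension of the prime field large enough to be a splitting field, so each component $\eta_{A,G/N}$ is an isomorphism at the algebraic-closure level. Since these isomorphisms are exactly the identifications named in the statement, commutativity of the square reads exactly $\widehat{e}_{\overline{\Q}_p} = \widehat{d}_{\overline{\Q}_p}^{*}$, and the final sentence of the corollary is only a rephrasing of this equality in terms of duality of natural transformations.

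There is no real obstacle: the content sits in the preceding Theorem and the corollary is a formal specialization. The one point requiring a sentence of justification is that passage to direct limits commutes past the $(-)^{*}$ functor in the sense used above, but this is the standard identification of the continuous dual of a direct limit of discrete abelian groups with the inverse limit of the duals, equipped with the compact-open topology.
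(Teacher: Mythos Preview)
Your proposal is correct and follows exactly the argument the paper intends: the corollary is stated without proof because it is immediate from the preceding Theorem once one passes to algebraic closures, where the vertical maps in the commutative square become the natural isomorphisms of Section~\ref{SECTION grothendieck groups} and commutativity reduces to the asserted equality. Your write-up simply makes this explicit, and the one technical point you flag (compatibility of the limits with duality) is the only thing requiring a word of care.
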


As with finite groups, we can identify projective $\bigO\db{G}$-modules by their shadow in characteristic 0:

\begin{prop}
The natural transformation $\widehat{e}_K$ is monic.  Thus, given two finitely generated projective $\bigO\db{G}$-modules $P,Q$, if $\widehat{e}_K([P])=\widehat{e}_K([Q])$, then $P\iso Q$.
\end{prop}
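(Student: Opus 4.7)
The plan is to reduce the assertion to the analogous statement for finite quotients of $G$ and then invoke left-exactness of the inverse limit.

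First I would treat the finite-group case: for any finite group $H$, the map $e_{K,H}:P_{\bigO}(H)\to P_K(H)$ is injective. With respect to the natural bases (indecomposable projective $\bigO H$-modules on the left, irreducible $KH$-modules on the right, after identifying $P_K(H)\iso R_K(H)$), the matrix of $e_{K,H}$ records the multiplicities $[K\otimes_{\bigO}\widetilde{P}:V]$ of irreducible $KH$-modules $V$ in the ``lifts'' of indecomposable projectives. By Brauer reciprocity this is the transpose of the decomposition matrix $D$ of $H$. The Cartan matrix of $H$ factors as $D^{T}D$ (or $DD^{T}$, depending on convention) and has non-zero determinant --- a power of $p$ --- see \cite[\S 16.1]{serreRepsFiniteGroups}. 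Hence $D$ has full rank, so $D^{T}$ is injective as a map of free abelian groups, and $e_{K,H}$ is monic.

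Next, for profinite $G$, by the preceding proposition we have $\widehat{e}_{K,G} = \invlim_{N} e_{K,G/N}$ as a morphism of inverse systems (the required compatibility with transition maps is exactly the naturality of $e_K$). Since the inverse limit functor on abelian groups is left-exact, the limit of a system of injections is injective, so $\widehat{e}_{K,G}$ is monic.

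For the second assertion, I would use that finitely generated projective $\bigO\db{G}$-modules are faithfully represented in $\widehat{P}_{\bigO}(G)$. Concretely, a finitely generated projective $P$ is isomorphic to $\invlim_N P/NP$, where each $P/NP$ is a finitely generated projective $\bigO[G/N]$-module for $N$ sufficiently small in the sense that $P$ arises from a projective over $\bigO[G/N_0]$. Thus $[P]\in\widehat{P}_{\bigO}(G)$ corresponds to the coherent sequence $([P/NP])_N$. If $\widehat{e}_{K,G}[P]=\widehat{e}_{K,G}[Q]$ then by monicity $[P]=[Q]$ in $\widehat{P}_{\bigO}(G)$, so $[P/NP]=[Q/NQ]$ in $P_{\bigO}(G/N)$ for every $N$, and Krull--Schmidt for finite-dimensional $\bigO[G/N]$-modules gives $P/NP\iso Q/NQ$. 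Passing to the inverse limit (after passing to a cofinal subsystem where the isomorphisms can be chosen compatibly, using that the set of isomorphisms $P/NP\to Q/NP$ is finite and non-empty at each stage, so forms an inverse system with non-empty limit by the Mittag-Leffler condition) yields $P\iso Q$.

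The only genuinely non-trivial step is the finite-group injectivity of $e_{K,H}$, and even that is classical. The assembly of compatible isomorphisms at the end requires some care but is a standard profinite argument; everything else is formal from the previously established naturality and the left-exactness of $\invlim$.
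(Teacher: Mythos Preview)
Your argument for the monicity of $\widehat{e}_K$ is correct and matches the paper's: reduce to the finite case (which you justify via the factorisation $C=D^TD$, exactly the content of the Serre reference the paper cites) and then use that an inverse limit of monomorphisms of abelian groups is a monomorphism.

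For the second assertion, the paper does not give a separate argument, so you are supplying more than it does --- but your argument has two genuine gaps. First, a finitely generated projective $\bigO\db{G}$-module need \emph{not} ``arise from a projective over $\bigO[G/N_0]$'' for any $N_0$: already $\bigO\db{G}$ itself is such a module when $G$ is infinite. What is true (and is all you need) is that $P\otimes_{\bigO\db{G}}\bigO[G/N]$ is projective for \emph{every} $N$, simply because $P$ is a summand of a free module. Second, and more seriously, the sets $\mathrm{Isom}_{\bigO[G/N]}(P_N,Q_N)$ are \emph{not} finite: $\bigO$ is an infinite ring, so even $\mathrm{Aut}(\bigO)=\bigO^\times$ is infinite. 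Your Mittag--Leffler/inverse-limit-of-finite-sets step therefore fails as written. The fix is either to refine the inverse system by also reducing modulo powers of $\mathfrak p$ (so the sets become finite quotients of $\mathrm{Isom}(P_N,Q_N)$, and the double limit recovers a genuine isomorphism), or to observe that these isomorphism sets are nonempty \emph{compact} spaces with continuous transition maps and apply the usual compactness argument, or --- cleanest --- to invoke the Krull--Schmidt theorem for finitely generated modules over the pseudocompact ring $\bigO\db{G}$ (available from Gabriel's work, already used elsewhere in the paper), which immediately gives $P\iso Q$ from $[P]=[Q]$.
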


\begin{proof}
For finite groups this is \cite[\S 16.1 Corollary 2]{serreRepsFiniteGroups}.   That $\widehat{e}_K$ is monic now follows from the exactness of $\dirlim$ (see e.g. \cite[1.2.6]{ribzal}).
\end{proof}

\section{Characters}\label{SECTION characters}

Denote by $\overline{\Q}_p^\tn{dis}$ the field $\overline{\Q}_p$ given the discrete topology.  For a finite group $G$, denote by $\tn{Conj}(G)$ the set of conjugacy classes of $G$ and by $\tn{Conj}_{p'}(G)$ the set of conjugacy classes of $p$-regular elements of $G$.  The set of irreducible characters of $G$ over $\overline{\Q}_p$ forms a basis for the vector space of (continuous) class functions from $G$ to $\overline{\Q}_p^{\tn{dis}}$ (\cite[Theorem 6]{serreRepsFiniteGroups}).  Similarly, the set of irreducible Brauer characters of $G$ over $\overline{\F}_p$ forms a basis for the vector space of class functions from conjugacy classes of $p$-regular elements of $G$ (\cite[Theorem 42]{serreRepsFiniteGroups}).  Denoting the vector space of continuous functions from a space $X$ to $\overline{\Q}_p^\tn{dis}$ by $C(X,\overline{\Q}_p^\tn{dis})$, we can express this paragraph succinctly as
$$C(\tn{Conj}(G),\overline{\Q}_p^\tn{dis})\iso R_{\overline{\Q}_p}(G)\otimes_{\Z} \overline{\Q}_p$$
and
$$C(\tn{Conj}_{p'}(G),\overline{\Q}_p^\tn{dis})\iso R_{\overline{\F}_p}(G)\otimes_{\Z} \overline{\Q}_p.$$

Both $\tn{Conj}(-)$ and $\tn{Conj}_{p'}(-)$ are covariant functors from finite groups to topological spaces.  A standard compactness argument verifies that for a profinite group $G$ we have $\tn{Conj}(G)\iso \invlim_N \tn{Conj}(G/N)$.   By regarding an element $g$ of $G$ as $p$-regular if and only if it is $p$-regular in every continuous finite quotient (``the supernatural order of $\langle g\rangle$ is prime to $p$'' \cite[I.1.3]{SerreCohGal}), we also have $\tn{Conj}_{p'}(G)\iso \invlim_N \tn{Conj}_{p'}(G/N)$.  Thus we regard $\tn{Conj}(-)$ and $\tn{Conj}_{p'}(-)$ as functors from profinite groups to profinite topological spaces.

Now since $C(\tn{Conj}(-),\overline{\Q}_p^\tn{dis})$ and $C(\tn{Conj}_{p'}(-),\overline{\Q}_p^{\tn{dis}})$ are contravariant functors and direct limits commute with tensor products, we obtain the same description of class functions on a profinite group $G$ (for pro-$p$ groups cf. \cite[\S 11.4]{ArdWadArtinIrreps}):

\begin{prop}\label{character R isos}
For a profinite group $G$, we have ring isomorphisms
$$C(\tn{Conj}(G),\overline{\Q}_p^\tn{dis})\iso \widehat{R}_{\overline{\Q}_p}(G)\otimes_{\Z} \overline{\Q}_p$$
and
$$C(\tn{Conj}_{p'}(G),\overline{\Q}_p^\tn{dis})\iso \widehat{R}_{\overline{\F}_p}(G)\otimes_{\Z} \overline{\Q}_p.$$
\end{prop}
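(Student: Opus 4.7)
The plan is to obtain each of the two isomorphisms by taking a direct limit of the corresponding isomorphism for the finite quotients $G/N$, which was recalled in the paragraph immediately before the statement. Fix $A \in \{\overline{\Q}_p, \overline{\F}_p\}$ and let $T$ denote either $\tn{Conj}$ or $\tn{Conj}_{p'}$ as appropriate; for each open normal $N \lhd_O G$ the finite theory supplies a natural ring isomorphism
$$C(T(G/N),\overline{\Q}_p^\tn{dis}) \iso R_A(G/N) \otimes_{\Z} \overline{\Q}_p.$$
On the representation-ring side, since $-\otimes_{\Z}\overline{\Q}_p$ commutes with direct limits and $\widehat{R}_A(G)$ is defined as $\dirlim_N R_A(G/N)$, one immediately gets a natural isomorphism $\dirlim_N \bigl(R_A(G/N) \otimes_{\Z} \overline{\Q}_p\bigr) \iso \widehat{R}_A(G) \otimes_{\Z} \overline{\Q}_p$. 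Thus everything reduces to identifying the direct limit of the class-function spaces with $C(T(G),\overline{\Q}_p^\tn{dis})$.

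The content of the proof is therefore the general lemma that if $X = \invlim_N X_N$ is an inverse limit of finite discrete spaces with surjective transition maps (a profinite space) and $D$ is any discrete space, then the canonical map $\dirlim_N C(X_N, D)\to C(X, D)$ is a bijection. I would establish surjectivity by compactness: given $f \in C(X,D)$, the image $f(X)$ is a compact discrete subset of $D$, hence finite, say $\{d_1,\dots,d_r\}$; the fibres $f^{-1}(d_i)$ form a finite clopen partition of $X$, and by the description of the inverse-limit topology each clopen is a finite union of basic opens pulled back from some $X_{N_i}$, so taking a common $N \subseteq \bigcap_i N_i$ exhibits $f$ as pulled back from $X_N$. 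Injectivity reduces, using the surjectivity of the transitions, to the observation that if two functions on $X_N$ and $X_{N'}$ pull back to the same function on $X$ they agree on $X_{N \cap N'}$. This compactness step is the only real technical point in the proof.

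Once this lemma is in place, applying it with $X = T(G) \iso \invlim_N T(G/N)$ (established in the preceding paragraphs for both $T=\tn{Conj}$ and $T=\tn{Conj}_{p'}$) and $D = \overline{\Q}_p^{\tn{dis}}$, and combining with the previous display, yields both isomorphisms of the proposition. Compatibility with the ring structure is automatic from the finite case: multiplication is pointwise on class functions and tensorial on the Grothendieck groups, and both operations commute with the structure maps of the directed systems, so the limit isomorphism is an isomorphism of rings.
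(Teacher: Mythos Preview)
Your proposal is correct and follows essentially the same approach as the paper. The paper's justification is the single sentence preceding the proposition (functoriality of $C(\tn{Conj}(-),\overline{\Q}_p^{\tn{dis}})$ together with the fact that direct limits commute with tensor products), and your version simply makes explicit the compactness lemma identifying $\dirlim_N C(X_N,D)$ with $C(\invlim_N X_N,D)$ for discrete $D$, which the paper leaves implicit.
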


The dual statement requires some notation.  Given a finite set $X$, denote by $\overline{\Q}_p[X]$ the free $\overline{\Q}_p$-module with basis $X$ -- as a functor this is the composition of ``free abelian group'' with $(\overline{\Q}_p\otimes_{\Z}-)$.  If $X=\invlim X_i$ is a profinite space, we define $$\overline{\Q}_p\db{X}:=\invlim_i \overline{\Q}_p[X_i].$$
Given a covariant functor $F$ from profinite groups to topological abelian groups and a profinite group $G=\invlim G/N$, define
$$F(G)\,\ctens\, \overline{\Q}_p:=\invlim_N (F(G/N)\otimes_{\Z} \overline{\Q}_p).$$
For a finite group $G$, more-or-less by definition we have natural isomorphisms
$$\overline{\Q}_p[\textnormal{Conj}(G)]^*\iso C(\tn{Conj}(G),\overline{\Q}_p^\tn{dis})$$
and
$$\overline{\Q}_p[\textnormal{Conj}_{p'}(G)]^*\iso C(\tn{Conj}_{p'}(G),\overline{\Q}_p^\tn{dis})$$
and hence taking limits, the dual of Proposition \ref{character R isos} is

\begin{prop}
For a profinite group $G$, we have homeomorphisms
$$\overline{\Q}_p\db{\tn{Conj}(G)}\iso \widehat{P}_{\overline{\Q}_p}(G)\,\ctens\, \overline{\Q}_p$$
and
$$\overline{\Q}_p\db{\tn{Conj}_{p'}(G)}\iso \widehat{P}_{\overline{\bigO}}(G)\,\ctens\, \overline{\Q}_p.$$
\end{prop}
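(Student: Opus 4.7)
The plan is to dualize Proposition \ref{character R isos}, exactly as the sentence preceding the statement announces. For a finite group $G$, the natural isomorphisms $\overline{\Q}_p[\tn{Conj}(G)]^* \iso C(\tn{Conj}(G), \overline{\Q}_p^{\tn{dis}})$ and $\overline{\Q}_p[\tn{Conj}_{p'}(G)]^* \iso C(\tn{Conj}_{p'}(G), \overline{\Q}_p^{\tn{dis}})$ recorded just above the statement are already in hand, as are the natural isomorphisms $P_{\overline{\Q}_p}(G) \iso R_{\overline{\Q}_p}(G)^*$ (which holds because $\overline{\Q}_p$ is a splitting field for every finite group) and $P_{\overline{\bigO}}(G) \iso R_{\overline{\F}_p}(G)^*$, both from the first Grothendieck-groups subsection. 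My first step is to combine these with the finite-group versions of the isomorphisms in Proposition \ref{character R isos} and take $\overline{\Q}_p$-vector-space duals on both sides. This yields, for every finite group $G$, natural isomorphisms
\begin{align*}
\overline{\Q}_p[\tn{Conj}(G)] &\iso P_{\overline{\Q}_p}(G) \otimes_{\Z} \overline{\Q}_p, \\
\overline{\Q}_p[\tn{Conj}_{p'}(G)] &\iso P_{\overline{\bigO}}(G) \otimes_{\Z} \overline{\Q}_p.
\end{align*}

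Next I pass to the inverse limit over open normal subgroups $N$ of the profinite group $G$. The functors $\tn{Conj}$ and $\tn{Conj}_{p'}$ on profinite groups were defined by inverse limits of their values on finite quotients, and $\overline{\Q}_p\db{-}$ was defined on a profinite space via the inverse limit of $\overline{\Q}_p[-]$ on finite quotients, so after taking limits the left-hand sides become $\overline{\Q}_p\db{\tn{Conj}(G)}$ and $\overline{\Q}_p\db{\tn{Conj}_{p'}(G)}$. By the definition of $\ctens$ recorded immediately before the statement, the right-hand sides become $\widehat{P}_{\overline{\Q}_p}(G) \ctens \overline{\Q}_p$ and $\widehat{P}_{\overline{\bigO}}(G) \ctens \overline{\Q}_p$. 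Naturality of the finite-group isomorphisms in the group variable is precisely what makes these inverse-system maps well-defined and produces the required algebraic isomorphisms.

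For the homeomorphism assertion, each individual isomorphism above is between finite-dimensional discrete $\overline{\Q}_p$-vector spaces and so is trivially a homeomorphism; an inverse limit of homeomorphisms between discrete spaces is a homeomorphism of the profinite limit spaces, and both sides of the claimed isomorphism carry precisely that inverse-limit topology. The only real obstacle lies in the first paragraph: one must check that the composite of dualities used to dualize Proposition \ref{character R isos} really is natural in group homomorphisms, since otherwise the inverse limit in the second paragraph cannot even be formed. Each individual identification has already been shown to be natural, so I expect this step to be routine but slightly delicate bookkeeping rather than genuinely difficult.
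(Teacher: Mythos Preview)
Your proposal is correct and follows exactly the route the paper indicates: the paper does not give a formal proof block for this proposition but simply states that, using the natural isomorphisms $\overline{\Q}_p[\tn{Conj}(G)]^*\iso C(\tn{Conj}(G),\overline{\Q}_p^{\tn{dis}})$ (and the $p'$-version) together with the dualities $P_{\overline{\Q}_p}\iso R_{\overline{\Q}_p}^*$ and $P_{\overline{\bigO}}\iso R_{\overline{\F}_p}^*$ from Section~\ref{SECTION grothendieck groups}, one obtains the statement by ``taking limits'' of the dual of Proposition~\ref{character R isos}. Your write-up merely spells out in more detail the naturality check and the passage to inverse limits that the paper leaves implicit.
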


\section{Blocks}\label{SECTION blocks}

It follows from observations of Gabriel \cite{GabrielThesis} that block theory makes sense for profinite groups.  Recall that an idempotent $e$ in a ring $R$ is said to be \emph{central} if it lives in the center of $R$, \emph{orthogonal} to an idempotent $f$ if $ef=0$ and $fe=0$,  and (\emph{centrally}) \emph{primitive} if it cannot be written as the sum of two non-zero orthogonal (central) idempotents.  Fix a $p$-modular system $(K,\bigO,k)$ and a profinite group $G$.  The following proposition is due essentially to Gabriel.

\begin{prop}
Let $G$ be a profinite group.  There exists a set of pairwise orthogonal centrally primitive central idempotents $E=\{e_i\,|\,i\in I\}$ in $\bigO\db{G}$ such that
$$\bigO\db{G}\iso \prod_{i\in I} \bigO\db{G}e_i.$$
The images $E'=\{e_i'\,|\,i\in I\}$ of the $e_i$ in $k\db{G}$ are again pairwise orthogonal centrally primitive central idempotents, and
$$k\db{G}\iso \prod_{i\in I} k\db{G}e_i'.$$
\end{prop}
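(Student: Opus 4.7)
The plan is to apply Gabriel's general decomposition theorem for pseudocompact algebras (from his thesis \cite{GabrielThesis}, invoked earlier in this section) to $\bigO\db{G}$, and then to deduce the $k\db{G}$ statement via reduction modulo $\mathfrak{p}$.

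First I would verify that $\bigO\db{G}$ is pseudocompact: it equals $\invlim_{N\lhd_O G}\bigO[G/N]$, an inverse limit of finite-dimensional $\bigO$-algebras under surjective transition maps, with the inherited topology making it pseudocompact (its open ideals have $\bigO$-finite quotients, which are themselves of finite length as $k$-modules modulo any $\mathfrak{p}^n$). Gabriel's theorem then provides a family $\{e_i\}_{i\in I}$ of pairwise orthogonal centrally primitive central idempotents in $\bigO\db{G}$ and a topological product decomposition $\bigO\db{G}\iso\prod_{i\in I}\bigO\db{G}e_i$, whose factors are pseudocompact indecomposable $\bigO$-algebras, the blocks of $\bigO\db{G}$.

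For the transfer to $k\db{G}$, I would use two facts about the reduction $\bigO\db{G}\twoheadrightarrow k\db{G}$. First, its kernel $\mathfrak{p}\bigO\db{G}$ lies in the Jacobson radical of $\bigO\db{G}$ (since $\mathfrak{p}$ is nilpotent in each $\bigO[G/N]$, the ideal $\mathfrak{p}\bigO\db{G}$ is topologically nilpotent), so idempotents lift uniquely through the reduction by the pseudocompact version of Hensel's lemma. Second, the induced map on centers $Z(\bigO\db{G})\twoheadrightarrow Z(k\db{G})$ is surjective: at each finite level $Z(\bigO[G/N])\twoheadrightarrow Z(k[G/N])$ is surjective (class sums reduce to class sums), and Mittag-Leffler ensures surjectivity of the inverse limit. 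Together these yield a bijection between central idempotents of the two algebras that preserves central primitivity: any orthogonal central decomposition $e_i'=f'+g'$ in $k\db{G}$ lifts uniquely (using surjectivity on centers and uniqueness of idempotent lifting) to an orthogonal central decomposition $e_i=f+g$ in $\bigO\db{G}$, contradicting primitivity of $e_i$ unless $f'=0$ or $g'=0$. Reducing the $\bigO\db{G}$-decomposition modulo $\mathfrak{p}$ factor by factor yields $k\db{G}\iso\prod_{i\in I}k\db{G}e_i'$.

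The main obstacle is the correct invocation of Gabriel's decomposition for pseudocompact $\bigO$-algebras (rather than $k$-algebras) and identifying the topological product with the profinite structure of $\bigO\db{G}$; both are handled within the pseudocompact framework but require care with the definitions. Given Gabriel's result, the remaining reduction-mod-$\mathfrak{p}$ arguments amount to standard idempotent-lifting combined with the Mittag-Leffler observation on centers.
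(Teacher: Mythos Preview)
Your proposal is correct and follows essentially the same route as the paper: invoke Gabriel's pseudocompact block decomposition for $\bigO\db{G}$, then deduce central primitivity of the $e_i'$ from uniqueness of lifting of central idempotents through $\mathfrak{p}\bigO\db{G}$. The paper is terser (citing Gabriel and Th\'evenaz directly rather than spelling out pseudocompactness and the surjectivity-on-centers step), and your ``Mittag--Leffler'' is more accurately ``exactness of inverse limits of compact groups,'' but the argument is the same.
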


\begin{proof}
The first claim is \cite[IV\S3 Corollaries 1 and 2]{GabrielThesis}.  It is easily checked that the $e_i'$ are pairwise orthogonal central idempotents, and that $k\db{G}\iso \prod_{i\in I} k\db{G}e_i'$.  That the $e_i'$ are centrally primitive follows by noting that the lift of a central idempotent is unique (as one can see by mimicking the proof of \cite[Theorem 3.1(c)]{thevenaz}, for example).
\end{proof}

We call the indecomposable factors $\bigO\db{G}e_i, k\db{G}e_i'$ the \emph{blocks} of $\bigO\db{G}, k\db{G}$ respectively and the corresponding central idempotents the \emph{block idempotents}.  Blocks are not functorial in general, but are functorial if we restrict our interest to surjective group homomorphisms.  We return to finite groups, and denote by $\cat{SFGp}$ the category of finite groups and surjective group homomorphisms.  In order to make precise the connection between blocks, projectives and simples, we define the following functors from $\cat{SFGp}$ to abelian groups:

\medskip

For $A\in \{\bigO, k\}$, the covariant functor $\tn{Idem}_A(-)$ takes a finite group $G$ to the free abelian group with basis the conjugacy classes of primitive idempotents of $AG$.  Given a morphism $\alpha:G\onto H$,  $\tn{Idem}_A(\alpha)$ sends the idempotent $[e]\in \tn{Idem}_A(G)$ to $[\alpha(e)]\in \tn{Idem}_A(H)$.

The covariant functor $\tn{CIcov}_A(-)$ takes $G$ to the free abelian group with basis the primitive central idempotents of $AG$.  On morphisms,  $\tn{CIcov}_A(\alpha)$ sends the central idempotent $[c]\in \tn{CIcov}_A(G)$ to $[\alpha(c)]\in \tn{CIcov}_A(H)$.

The contravariant functor $\tn{CIcon}_A(-)$ takes $G$ to the free abelian group with basis the primitive central idempotents of $AG$.  On morphisms, $\tn{CIcon}_A(\alpha)$ sends the primitive central idempotent $[d]\in \tn{CIcon}_A(H)$ to $[c]$, where $c$ is the unique central primitive idempotent of $AG$ such that $d\alpha(c)=d$ (a standard argument shows that such a $c$ exists and is unique: existence follows since $\alpha(1)d=d$, while uniqueness follows from the fact that distinct primitive central idempotents are orthogonal.  See e.g. \cite[\S 37]{thevenaz} for more on this).

\medskip

In fact, $\tn{Idem}_A(-)$ makes sense as a functor with domain the category of finite groups and all homomorphisms, in which case it is naturally isomorphic to $P_A(-)$ by the correspondence $e\leftrightarrow AGe$.  However, as central idempotents need not be preserved by arbitrary maps, we require all group homomorphisms in this section to be surjective.  We regard $\tn{CIcov}_A(-)$ as a subfunctor of $\tn{Idem}_A(-)$ via the inclusion $[c]$ to $[c]$  --  this corresponds to taking the block $AGc$ to the projective left module $AGc$.  Define the pairing
$$\tn{CIcov}_A(G)\times \tn{CIcon}_A(G)\to \Z$$
on idempotents by setting $\langle c,d\rangle_G$ to be the number of non-zero primitive summands of $cd$, and extending bilinearly.  This pairing is non-singular and has the property that
$$\langle \tn{CIcov}(\alpha)(c),d\rangle_H =\langle c,\tn{CIcon}(\alpha)(d)\rangle_G,$$
so defines a duality
$$\tn{CIcov}_A(-)\iso \tn{CIcon}_A(-)^*.$$

Think of $R_k(-), \tn{CIcon}_k(-)$ as functors from $\cat{SFGp}$ and define a map $\eta_{k,G}:R_k(G)\to \tn{CIcon}_k(G)$ by sending the irreducible module $[V]$ to $\dim(V)[c]$, where $c$ is the unique central primitive idempotent such that $cV\neq 0$.

\begin{prop}
The map $\eta_{k,G}$ is the component at $G$ of a natural transformation $\eta_k: R_k(-)\to \tn{CIcon}_k(-)$.
\end{prop}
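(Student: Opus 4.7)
The plan is to verify the naturality square directly. Fix a surjection $\alpha:G\onto H$ in $\cat{SFGp}$ and an irreducible $kH$-module $V$; let $d$ be the unique primitive central idempotent of $kH$ with $dV=V$, and let $c$ be the unique primitive central idempotent of $kG$ with $d\alpha(c)=d$ (as guaranteed by the definition of $\tn{CIcon}_k(\alpha)$). Then the ``along the top then down'' path sends $[V]$ to $\tn{CIcon}_k(\alpha)(\dim(V)[d])=\dim(V)[c]$. The other path sends $[V]$ first to $[{}^{\alpha}V]$; decomposing this in $R_k(G)$ as $\sum_W m_W[W]$ over isomorphism classes of irreducible $kG$-modules, and writing $c_W$ for the block idempotent of $W$ in $kG$, we obtain $\eta_{k,G}([{}^{\alpha}V])=\sum_W m_W\dim(W)[c_W]$.

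The heart of the matter is to show that $c_W=c$ for every composition factor $W$ appearing in ${}^{\alpha}V$. Since $\alpha$ induces a surjective algebra map $kG\to kH$ and $d\alpha(c)=d$, every $v\in V$ satisfies $\alpha(c)\cdot v = \alpha(c)dv = dv = v$, so $V$ acquires a $kG$-action via $\alpha$ on which $\alpha(c)$ acts as the identity and every other $\alpha(c')$ (for $c'\neq c$ a primitive central idempotent of $kG$) acts as zero, by orthogonality of the $\alpha(c')$'s as central idempotents summing to $1$. Thus ${}^{\alpha}V$ lies in the block $kGc$, and so does each of its composition factors $W$, giving $c_W=c$.

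Combining the two steps, the second path evaluates to $\bigl(\sum_W m_W\dim(W)\bigr)[c]=\dim({}^{\alpha}V)[c]=\dim(V)[c]$, matching the first path, so the square commutes. The only subtlety is keeping track of the contravariance of both $R_k(-)$ and $\tn{CIcon}_k(-)$ and unpacking the defining property of $\tn{CIcon}_k(\alpha)$; beyond that the argument is the standard observation that a quotient map of group algebras sends each block of the quotient into a unique block of the cover. I expect no real obstacle — the result essentially encodes the fact that assigning ``the block containing $V$'' to an irreducible module is compatible with inflation along surjections.
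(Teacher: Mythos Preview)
Your proof is correct and follows essentially the same route as the paper: both check the naturality square directly by showing that the inflated module ${}^{\alpha}V$ lies in the block of $kG$ picked out by $\tn{CIcon}_k(\alpha)$, and then use that $\dim({}^{\alpha}V)=\dim(V)$. The paper's argument is considerably terser --- it simply asserts that there is a \emph{unique} primitive central idempotent $d$ of $kG$ with $d\,{}^{\alpha}V\neq 0$, rephrases this as $\alpha(d)V\neq 0$, and concludes $d=c$ --- whereas you explicitly decompose $[{}^{\alpha}V]=\sum_W m_W[W]$ and verify that each composition factor $W$ lies in the block $c$; your version supplies the detail the paper leaves implicit.
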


\begin{proof}
We are asserting that for $V$ an irreducible $kG$-module and $\alpha:G\onto H$, we have the equality $\dim({}^{\alpha}V)[d]=\dim(V)[c]$, where $c$ is the unique centrally primitive central idempotent such that $\alpha(c)V\neq 0$, and $d$ is the unique centrally primitive central idempotent in $kG$ with $d{}^{\alpha}V\neq 0$; that is, with $\alpha(d)V\neq 0$.  Since ${}^{\alpha}(-)$ preserves dimension, this is immediate.
\end{proof}

Taking direct limits over the finite extensions of $\F_p$, we get a natural transformation $\eta=\eta_{\overline{\F}_p}:R_{\overline{\F}_p}(-)\to \tn{CIcon}_{\overline{\F}_p}(-)$.

\begin{prop}
The natural transformation $\eta:  R_{\overline{\F}_p}(-)\to \tn{CIcon}_{\overline{\F}_p}(-)$ is dual to the inclusion $\tn{CIcov}_{\overline{\F}_p}(-)\to \tn{Idem}_{\overline{\F}_p}(-)$.
\end{prop}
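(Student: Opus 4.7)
The plan is to unwind both dualities at the level of bases, at which point the claim reduces to a single $\Hom$-computation. Since $\overline{\F}_p$ is a splitting field for every finite group, the natural isomorphism $\tn{Idem}_{\overline{\F}_p}(-)\iso P_{\overline{\F}_p}(-)$ (via $e\leftrightarrow \overline{\F}_p G e$, as noted in the text) composed with $P_{\overline{\F}_p}(-)\iso R_{\overline{\F}_p}(-)^*$ yields $\tn{Idem}_{\overline{\F}_p}(-)^*\iso R_{\overline{\F}_p}(-)$, under which the class $[V]$ of an irreducible module pairs with the class of a primitive idempotent $[f]$ to give $\dim_{\overline{\F}_p}\Hom_{\overline{\F}_p G}(\overline{\F}_p G f, V)=\dim(fV)$.

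Separately, I would verify that the pairing $\tn{CIcov}\times\tn{CIcon}\to \Z$ is diagonal on the common basis of primitive central idempotents: distinct primitive central idempotents are orthogonal and any primitive central idempotent is idempotent, so $\langle c,d\rangle=\delta_{c,d}$. Hence the duality $\tn{CIcov}_{\overline{\F}_p}(-)^*\iso \tn{CIcon}_{\overline{\F}_p}(-)$ identifies the basis element $c$ on the right with evaluation at $c$ on the left.

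With these identifications in place, the dual $i^*$ of the inclusion $i:\tn{CIcov}_{\overline{\F}_p}(-)\hookrightarrow \tn{Idem}_{\overline{\F}_p}(-)$ is a natural transformation $R_{\overline{\F}_p}(-)\to \tn{CIcon}_{\overline{\F}_p}(-)$, and it suffices to show $i^*_G([V])=\eta_G([V])$ for $V$ irreducible and $G$ finite. Unwinding,
$$\langle i^*_G([V]), c\rangle_G = \langle [V], [\overline{\F}_p G c]\rangle_G = \dim_{\overline{\F}_p}(cV)$$
for any primitive central idempotent $c$. Since $V$ is irreducible this equals $\dim(V)$ when $c$ is the unique $c_V$ with $c_V V\neq 0$ and $0$ otherwise, so $i^*_G([V])=\dim(V)\cdot c_V=\eta_G([V])$.

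The main obstacle, such as it is, is purely bookkeeping: keeping track of which duality corresponds to which pairing, and, if one prefers to argue first over a finite coefficient field $k$ before passing to $\overline{\F}_p=\dirlim k$, noting that the identification $\tn{Idem}_k\iso R_k^*$ requires $k$ to be a splitting field for the group in question; working directly over $\overline{\F}_p$ sidesteps this. The actual content of the proof is the standard identity $\Hom(Re,M)\iso eM$ applied to central idempotents.
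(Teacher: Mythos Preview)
Your proof is correct and follows essentially the same route as the paper: both arguments reduce the duality statement to the identity $\Hom_{\overline{\F}_pG}(\overline{\F}_pGc,M)\iso cM$, so that the two relevant pairings each evaluate to $\dim(cM)$. The paper fixes a block idempotent $c$ and varies the module, while you fix an irreducible $V$ and vary $c$, but this is the same computation viewed from the two sides of the square.
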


\begin{proof}
For a finite group $G$, the square to be checked is
$$
\xymatrix{
 \tn{CIcov}_{\overline{\F}_p}(G)\ar[d]^{\sim}\ar[r]     & \tn{Idem}_{\overline{\F}_p}(G)\ar[d]^{\sim} \\
 \tn{CIcon}_{\overline{\F}_p}(G)^* \ar[r]^{\eta_{G}^*}                      & R_{\overline{\F}_p}(G)^*.}
$$
This amounts to saying that for a block idempotent $c$, the functions $\langle c,-\rangle\circ\eta_G$ and $\langle \overline{\F}_pGc,-\rangle$ are equal.  This is easily verified -- for a module $M$, both send $[M]$ to the dimension of $cM$.
\end{proof}

By functoriality, for $A\in\{\bigO,k\}$ we can thus define the following maps from the profinite group $G=\invlim_{N\lhd_O G}\,G/N$ to topological abelian groups:
$$\widehat{\tn{CIcov}}_A(G):=\invlim \tn{CIcov}_A(G/N),$$
$$\widehat{\tn{CIcon}}_A(G):=\dirlim \tn{CIcon}_A(G/N).$$
In this way we regard $\widehat{\tn{CIcov}}_A(-)$ and $\widehat{\tn{CIcon}}_A(-)$ as (covariant and contravariant, respectively) functors from profinite groups with surjective homomorphisms to topological abelian groups.  To obtain concrete descriptions of these functors, we note the following.

\begin{prop}
For $A\in\{\bigO,k\}$, restricting the surjective inverse system $\{A[G/N],\varphi_{MN}\}$ to the central idempotents of $A[G/N]$ expresses the central idempotents of $A\db{G}$ as the inverse limit of the central idempotents of the $A[G/N]$.

The set of centrally primitive central idempotents of $A\db{G}$ is a discrete subset of $A\db{G}$, and can be obtained as the direct limit of the centrally primitive central idempotents of the $A[G/N]$.
\end{prop}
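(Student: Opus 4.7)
The plan is to establish the proposition in three stages, corresponding to the inverse-limit claim, the discreteness claim, and the direct-limit identification.

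For the first claim, I would verify that the ring-theoretic characterization of a central idempotent by $e^2=e$ and $ex=xe$ for every $x$ is preserved both by the surjective projections $\varphi_N:A\db{G}\to A[G/N]$ and by inverse limits. A central idempotent of $A\db{G}$ therefore projects to a compatible family of central idempotents of $A[G/N]$; conversely any such compatible family assembles into an element of $A\db{G}$ which is central and idempotent, both being componentwise conditions.

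For discreteness, I would use the decomposition $A\db{G}=\prod_{i\in I}A\db{G}e_i$ from the previous proposition to enumerate the centrally primitive central idempotents as $\{e_i\}$. For each $e_i$ choose an open normal $N$ with $\varphi_N(e_i)\neq 0$; such $N$ exists because $e_i\neq 0$. For $j\neq i$, orthogonality $e_ie_j=0$ gives $\varphi_N(e_i)\varphi_N(e_j)=0$, so $\varphi_N(e_j)=\varphi_N(e_i)$ would force $\varphi_N(e_i)^2=0$, contradicting $\varphi_N(e_i)\neq 0$. Hence the open set $e_i+\ker\varphi_N$ isolates $e_i$ from the other centrally primitive central idempotents.

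For the direct-limit identification, write $\mathcal{P}_N$ for the basis of $\tn{CIcon}_A(G/N)$. I would define $\Phi:\dirlim_N\mathcal{P}_N\to\{e_i\}$ by sending $d\in\mathcal{P}_N$ to the unique $e_i$ with $d\leq\varphi_N(e_i)$; uniqueness follows from writing $1=\sum_{i\in I_N}\varphi_N(e_i)$ as a finite orthogonal sum in $A[G/N]$ and invoking primitivity of $d$. Compatibility with the $\tn{CIcon}$-transitions $d\mapsto$ the unique $c\in\mathcal{P}_M$ with $d\leq\varphi_{MN}(c)$ is direct, and surjectivity holds since any primitive summand of $\varphi_N(e_i)$, for $N$ with $\varphi_N(e_i)\neq 0$, maps to $e_i$.

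The hard part will be injectivity of $\Phi$, which reduces to the key claim that $\varphi_N(e_i)$ is itself primitive in $A[G/N]$ for $N$ sufficiently small. I would prove this by applying the first part of the proposition to the sub-inverse-system $\{A[G/N]\varphi_N(e_i)\}_N$, whose inverse limit is the block $A\db{G}e_i$; by primitivity of $e_i$ this block has only the trivial central idempotents $\{0,e_i\}$. A Mittag-Leffler argument on the resulting inverse system of finite Boolean algebras of central idempotents, combined with the uniqueness of lifts of central idempotents through the Jacobson radical of each finite-dimensional algebra $A[G/N]\varphi_N(e_i)$, will then force those Boolean algebras to be $\{0,\varphi_N(e_i)\}$ for $N$ small. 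Granted the claim, two representatives of $\dirlim_N\mathcal{P}_N$ mapping to the same $e_i$ both land in the singleton $\{\varphi_L(e_i)\}$ for sufficiently small $L$, and hence agree in the direct limit.
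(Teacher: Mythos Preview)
Your first two stages are essentially the paper's own arguments: the paper phrases the inverse-limit claim via continuity of $x\mapsto x-x^2$ on the centre, and your componentwise verification is an equivalent formulation; your discreteness argument (orthogonality forces $\varphi_N(e_i)\neq\varphi_N(e_j)$ once $\varphi_N(e_i)\neq 0$) is exactly what the paper does.

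For the direct-limit claim the paper is extremely terse---it merely writes down the $\tn{CIcon}$ transition maps and asserts the identification---so your attempt to spell out the bijection $\Phi$ is genuinely adding content. Your reduction of injectivity to the key claim ``$\varphi_N(e_i)$ is centrally primitive for small $N$'' is correct, but your sketched proof of that claim has a gap. The ingredients you name, Mittag--Leffler on the finite Boolean algebras $C_N$ of central idempotents below $\varphi_N(e_i)$ together with uniqueness of central-idempotent lifts through the Jacobson radical, do not by themselves force $C_N=\{0,\varphi_N(e_i)\}$. Mittag--Leffler only controls the \emph{eventual images} $C_N'$, and one can build inverse systems of four-element Boolean algebras with two-element inverse limit where no $C_N$ is ever two-element. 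The Jacobson-radical remark is not applicable here because the kernels of the transition maps $A[G/N]\to A[G/M]$ need not lie in the radical (take $M/N$ of order prime to $p$).

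What actually makes the argument go through is that the maps $C_N\to C_M$ are \emph{surjective}: the map of centres $Z(A[G/N])\to Z(A[G/M])$ is onto (a conjugacy-class sum in $A[G/N]$ maps to a positive integer multiple of a class sum in $A[G/M]$, and every class sum downstairs is hit), and idempotents lift along surjections of commutative Artinian (for $A=k$) or complete semilocal (for $A=\bigO$) rings. With surjectivity in hand, a nontrivial $f\in C_{N_0}$ would lift compatibly through the whole tower by the usual nonempty-inverse-limit-of-finite-sets argument, producing a nontrivial central idempotent of $A\db{G}e_i$ and contradicting primitivity of $e_i$. In fact this shows $\varphi_N(e_i)$ is primitive for \emph{every} $N$ with $\varphi_N(e_i)\neq 0$, not merely for small $N$, and your injectivity argument then concludes as written.
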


\begin{proof}
The map from a profinite ring to itself defined by $x\mapsto x-x^2$ is continuous, so the inverse image of 0 is closed.  The first claim follows by applying this argument to the center of $A\db{G}$.

For the second claim, take $c$ a primitive central idempotent and find some map $\varphi$ to a finite quotient with $\varphi(c)\neq 0$.  The only primitive central idempotent in the inverse image of $\varphi(c)$ under $\varphi$ is $c$ itself, because $\varphi$ restricted to the primitive central idempotents not mapping to 0 is injective.  Hence, the set of primitive central idempotents is discrete.  The maps required for the direct system are those yielding $\tn{CIcon}(-)$: namely, given a quotient map $\varphi_{MN}:A[G/N]\to A[G/M]$, take the centrally primitive central idempotent $d$ in $A[G/M]$ to the unique centrally primitive central idempotent $c$ of $A[G/N]$ such that $\varphi_{MN}(c)d\neq0 $.
\end{proof}

It follows that the block idempotents of $A\db{G}$ are a basis for $\widehat{\tn{CIcon}}_A(G)$:

\begin{corol}\label{block isomorphisms}
For $A\in \{\bigO,k\}$ and $G$ a profinite group we have isomorphisms
$$\widehat{\tn{CIcon}}_A(G)\iso \bigoplus_{c}\Z_{[c]}$$
and
$$\widehat{\tn{CIcov}}_A(G)\iso \prod_{c}\Z_{[c]}$$
where $c$ runs through the set of centrally primitive central idempotents of $A\db{G}$.
\end{corol}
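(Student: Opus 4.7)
The plan is to reduce both assertions to the preceding proposition together with the duality $\tn{CIcov}_A(-)\iso \tn{CIcon}_A(-)^*$ established just above.

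First I would tackle the isomorphism for $\widehat{\tn{CIcon}}_A(G)$. By definition this is $\dirlim_N \tn{CIcon}_A(G/N)$, the direct limit of the free abelian groups on the finite sets of centrally primitive central idempotents of the $A[G/N]$, with transition maps induced by the $\tn{CIcon}_A$-functoriality. The preceding proposition identifies this direct system of sets (with the same transition maps) as having colimit the set of centrally primitive central idempotents of $A\db{G}$. Since the free abelian group functor commutes with direct limits of sets, I obtain
$$
\widehat{\tn{CIcon}}_A(G) \iso \bigoplus_{c}\Z_{[c]}
$$
with $c$ ranging over the centrally primitive central idempotents of $A\db{G}$.

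For the second isomorphism, I would invoke the duality $\tn{CIcov}_A(G/N)\iso \tn{CIcon}_A(G/N)^*$ of finitely generated free abelian groups, which is natural in the surjection $G/M\onto G/N$. Taking inverse limits and using that $\Hom_{\Z}(-,\Z)$ sends colimits to limits, this gives
$$
\widehat{\tn{CIcov}}_A(G) = \invlim_N \tn{CIcov}_A(G/N) \iso \invlim_N \tn{CIcon}_A(G/N)^* \iso \widehat{\tn{CIcon}}_A(G)^*.
$$
Plugging in the first isomorphism produces $\widehat{\tn{CIcov}}_A(G)\iso \bigl(\bigoplus_{c}\Z_{[c]}\bigr)^*\iso \prod_{c}\Z_{[c]}$, the last identification because a continuous homomorphism from a discrete direct sum of copies of $\Z$ to $\Z$ is an arbitrary homomorphism and hence is determined by its values on the summands.

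This proof is essentially a bookkeeping exercise. The only points that merit a moment's care are verifying that the transition maps defining $\widehat{\tn{CIcon}}_A$ agree with those in the preceding proposition (they coincide by construction of the functor $\tn{CIcon}_A$), and checking that the compact-open topology on $\bigl(\bigoplus_c \Z_{[c]}\bigr)^*$ matches the product topology on $\prod_c \Z_{[c]}$. Neither is a real obstacle; there is no deep step after the preceding proposition has been invoked.
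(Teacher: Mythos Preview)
Your proposal is correct and follows precisely the route the paper has in mind: the corollary is stated without proof, prefaced only by the remark that the block idempotents form a basis for $\widehat{\tn{CIcon}}_A(G)$ as a consequence of the preceding proposition, and the duality argument you give for the second isomorphism mirrors exactly the argument the paper uses for the analogous Lemma~\ref{grothendieck isomorphisms}. There is nothing to add.
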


Taking limits of $\eta_{k,G/N}, \eta_{k,G/N}^*$ we obtain the natural transformation
$$\widehat{\eta}_k: \widehat{R}_k(-)\to \widehat{\tn{CIcon}}_k(-)$$
and its dual
$$\widehat{\eta}_k^*: \widehat{\tn{CIcov}}_k(-)\to \widehat{P}_{\bigO}(-).$$
Irreducible $k\db{G}$-modules are naturally partitioned into blocks by $\widehat{\eta}_{k,G}$.  Writing $\{c_i\,|\,i\in I\}$ for the maximal set of pairwise orthogonal centrally primitive central idempotents of $A\db{G}$ (for $A\in \{\bigO,k\}$), a projective $A\db{G}$-module $P$ splits into blocks by $P\mapsto \prod_i c_iP$, but note that this map does not commute with surjections.

One also obtains a natural ``$p$-block'' structure on $K\db{G}$, via the composition
$$
\xymatrix{
 \widehat{R}_K(-) \ar[r]^{\widehat{d}_{K}}   & \widehat{R}_{k}(-) \ar[r]^{\widehat{\eta}_{k}} & \widehat{\tn{CIcon}}_k(-).}
$$

\section{The Cartan homomorphism}\label{SECTION Cartan}

The Cartan homomorphism $c_{k,G}:P_k(G)\to R_k(G)$ for a finite group $G$ is simply given by $[P]\mapsto [P]$.  That is, it records the multiplicity of the composition factors of a projective $kG$-module $P$.  The information in $c_{k,G}$ is usually recorded in the \emph{Cartan Matrix} $C(G)$ of $G$ with respect to $k$.  The rows of $C(G)$ are indexed by simple $kG$-modules, the columns by indecomposable projective $kG$-modules, and $C(G)_{S,P}$ is the multiplicity of $S$ as a composition factor of $P$.  While the Cartan homomorphism is not a natural transformation, the information recorded in the Cartan matrix makes its computation a valuable tool in the study of finite groups.

We demonstrate in this section how we might try to understand projective modules for certain well behaved profinite groups in terms of the Cartan matrices of their finite quotients.  Our approach makes use of the Jennings filtration \cite{jennings}.

\medskip

Let $G$ be a finite group with $U$ a normal $p$-subgroup, and suppose $k$ is sufficiently large for $G$.  Let $J^0=kU, J^1=J=\rad(kU)$, and for each $i\in \N$ recursively define $J^i:=\rad(J^{i-1})$.

\begin{lemma}\label{Radical quotient isomorphism}
We have $kG/JkG\iso k[G/U]$.
\end{lemma}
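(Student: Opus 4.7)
The plan is to identify $JkG$ with the kernel of the natural surjection $\pi: kG \twoheadrightarrow k[G/U]$ induced by the quotient map $G \twoheadrightarrow G/U$, after which the First Isomorphism Theorem yields the result.

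First I would observe that since $U$ is a $p$-group and $\tn{char}(k) = p$, the algebra $kU$ is local with $J = \rad(kU)$ equal to the augmentation ideal; concretely, $J$ is generated as a $k$-vector space by the elements $\{u - 1 \mid u \in U\}$. This is standard: the only simple $kU$-module is the trivial one, so $kU/J \iso k$ via the augmentation map, whose kernel is spanned by $\{u-1\}$.

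Next, because $U \lhd G$, for every $g \in G$ and $u \in U$ one has $g(u-1)g^{-1} = gug^{-1} - 1 \in J$, so $J$ is stable under $G$-conjugation. Hence $JkG = kG \cdot J$ is a two-sided ideal of $kG$, and moreover it is precisely the two-sided ideal of $kG$ generated by $\{u - 1 \mid u \in U\}$.

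Now $\pi: kG \to k[G/U]$ is surjective with kernel exactly the two-sided ideal generated by $\{u-1 \mid u \in U\}$ (these elements are visibly in the kernel, and modding them out identifies every coset $gU$ to a single element, which is the universal property of $k[G/U]$). Combining the last two paragraphs gives $\ker \pi = JkG$, and the isomorphism $kG/JkG \iso k[G/U]$ follows. I do not expect any real obstacle here; the only point requiring care is the identification $J = \ker(kU \to k)$, which relies essentially on $U$ being a $p$-group in characteristic $p$.
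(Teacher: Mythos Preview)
Your proof is correct and follows essentially the same route as the paper's: both identify $J$ with the augmentation ideal $I_U$ of $kU$ (using that $U$ is a $p$-group), and then recognize $kG/I_UkG$ as $k[G/U]$. The paper phrases the latter step via coinvariants, writing $k[G/U]\iso (kG)_U\iso kG/I_UkG$, while you spell out the kernel of $\pi$ directly, but the content is identical.
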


\begin{proof}
We have $k[G/U]\iso (kG)_U\iso kG/I_UkG$, where $I_U$ is the kernel of the augmentation map $kU\to k$ given by $u\mapsto 1$.  The result now follows from the fact that $I_U=J$.
\end{proof}

Since $U$ is normal, one can show by induction that for each $d\in \N_0$, $J^i$ is a $kG$-module, with action from $G$ given by conjugating elements of $U$.  It follows that for each $i$, $J^i/J^{i+1}$ is a $kG$-module.  This also shows that $J^ikG$ is a submodule of $kG$, since for $g\in G,j\in J^i, x\in kG$ we have $gjx=(gjg^{-1})(gx)\in J^ikG$.

Define the map
$$\rho:J^i/J^{i+1}\otimes_k kG/JkG\to J^ikG/J^{i+1}kG$$
by $(j+J^{i+1}\otimes x)\mapsto jx+J^{i+1}kG$.  This map is a well-defined surjective $kG$-module homomorphism.

\begin{lemma}
The maps $J^i/J^{i+1}\otimes_k kG/JkG\to J^ikG/J^{i+1}kG$ are $kG$-module isomorphisms, for each $i$.
\end{lemma}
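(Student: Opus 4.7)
The plan is to prove injectivity of $\rho$ by a dimension count, since surjectivity and $kG$-linearity have already been established. The key input is that $kG$ is free as a left $kU$-module, via the decomposition into right cosets of $U$ in $G$.

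More precisely, I would first fix a set of right coset representatives $g_1, \ldots, g_n$ for $U$ in $G$, where $n = [G:U]$, giving a direct sum decomposition
$$kG = \bigoplus_{j=1}^{n} kU \cdot g_j$$
as left $kU$-modules. Multiplying on the left by $J^i$ (a two-sided ideal of $kU$) yields $J^i kG = \bigoplus_j J^i g_j$, and likewise for $J^{i+1}$, so
$$J^i kG / J^{i+1} kG \;\iso\; \bigoplus_{j=1}^{n} \bigl(J^i/J^{i+1}\bigr) g_j$$
as $k$-vector spaces. In particular its $k$-dimension is $n \cdot \dim_k(J^i/J^{i+1})$.

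On the other side, Lemma \ref{Radical quotient isomorphism} gives $kG/JkG \iso k[G/U]$, which has $k$-dimension $n$. Hence
$$\dim_k\bigl(J^i/J^{i+1} \otimes_k kG/JkG\bigr) \;=\; n\cdot \dim_k(J^i/J^{i+1}),$$
matching the dimension computed above. Since $\rho$ is a surjective $k$-linear map between finite-dimensional $k$-vector spaces of equal dimension, it is bijective, and therefore an isomorphism of $kG$-modules.

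There is no substantial obstacle here: the whole proof rests on the (standard) freeness of $kG$ as a left $kU$-module, which is what lets the Jennings filtration of $kU$ transfer cleanly to the filtration of $kG$ by the ideals $J^i kG$. The only thing to be mildly careful about is keeping the left/right conventions consistent so that left multiplication by $J^i$ really does act diagonally on the direct sum $\bigoplus_j kU g_j$.
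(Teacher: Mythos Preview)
Your proof is correct and uses the same overall strategy as the paper --- a dimension count to upgrade the known surjection to an isomorphism --- but the execution differs. You compute the two dimensions for each fixed $i$ separately, using the left $kU$-freeness of $kG$ via a coset decomposition to see directly that $\dim_k(J^ikG/J^{i+1}kG)=[G:U]\cdot\dim_k(J^i/J^{i+1})$. The paper instead sums over all $i$: on the left the sum telescopes to $\dim_k kU\cdot\dim_k k[G/U]=|G|$, and on the right it telescopes to $\dim_k kG=|G|$; since each $\rho$ is surjective, equality of the totals forces equality termwise. Your approach is more explicit and yields each $i$ independently; the paper's is slicker but relies on the observation that a family of surjections whose source and target dimensions sum to the same total must all be bijections.
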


\begin{proof}
Since the maps are surjective, we need only show that the sum of the dimensions on either side agree.  The left hand side is
\begin{align*}
\sum_{i\geqslant 0}\dim_k(J^i/J^{i+1}\otimes_k kG/JkG)   &= \dim(kG/JkG)\sum_{i\geqslant 0}\dim (J^i/J^{i+1}) \\
                                                    &= \dim(k[G/U])\dim(kU) \\
                                                    &= (|G|/|U|)|U| = |G|.
\end{align*}
The right hand side is
\begin{align*}
\sum_{i\geqslant 0}\dim_k(J^ikG/J^{i+1}kG)               &= \sum_{i\geqslant 0}(\dim(J^ikG) - \dim(J^{i+1}kG)) \\
                                                    &= \dim J^0kG \\
                                                    &= \dim kG = |G|.
\end{align*}
\end{proof}

From this isomorphism and Lemma \ref{Radical quotient isomorphism} we observe that we have filtered $kG$ by projective $k[G/U]$-modules.  The same applies for arbitrary projectives:

\begin{lemma}\label{filtration isomorphism for P}
Let $P$ be a projective $kG$-module.  The maps $J^i/J^{i+1}\otimes_k P/JP\to J^iP/J^{i+1}P$ are isomorphisms, for each $i$.
\end{lemma}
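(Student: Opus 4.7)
The plan is to reduce the statement for an arbitrary finitely generated projective $P$ to the case $P=kG$ already handled in the previous lemma, by exhibiting $\rho_P$ as a natural transformation between two additive functors in the module argument.

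First I would check that the formula $(j+J^{i+1})\otimes(x+JP)\mapsto jx+J^{i+1}P$ gives a well-defined $kG$-linear map $\rho_P:J^i/J^{i+1}\otimes_k P/JP\to J^iP/J^{i+1}P$, natural in the $kG$-module $P$. Well-definedness uses $J\cdot J^i\subseteq J^{i+1}$, so that both $j\in J^{i+1}$ and $x\in JP$ yield an image in $J^{i+1}P$; $kG$-linearity uses that $J^i$ is a $kG$-submodule of $kU$ under conjugation and that the diagonal $kG$-action on the tensor product matches left multiplication inside $kG$ modulo $J^{i+1}P$.

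Next I would observe that both assignments $P\mapsto J^i/J^{i+1}\otimes_k P/JP$ and $P\mapsto J^iP/J^{i+1}P$ preserve finite direct sums: the first because $V\otimes_k(-)$ and $(-)/J(-)$ preserve direct sums, the second because $J^i(P_1\oplus P_2)=J^iP_1\oplus J^iP_2$ componentwise. Hence $\rho$ is a natural transformation between additive functors from finitely generated $kG$-modules to $kG$-modules. The previous lemma tells us that $\rho_{kG}$ is an isomorphism, so additivity immediately gives that $\rho_F$ is an isomorphism for every finitely generated free $kG$-module $F$. If $P$ is projective and $Q$ is chosen so that $F=P\oplus Q$ is free, then $\rho_F$ splits as $\rho_P\oplus\rho_Q$; being an isomorphism, each summand is forced to be an isomorphism, which is the lemma.

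I do not foresee a serious obstacle: the content of the proof is essentially the recognition that the previous lemma handles $P=kG$, and the formal extension by additivity. The only step that requires mild care is the verification that $\rho_P$ is genuinely a $kG$-map (using the conjugation action on $J^i/J^{i+1}$ and the fact that $JkG$ annihilates $P/JP$ when we pass the $G$-action through $kG/JkG\iso k[G/U]$), but this is routine and parallel to the computation underlying the $P=kG$ case.
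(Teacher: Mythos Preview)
Your proposal is correct. The paper states this lemma without proof, treating it as an immediate consequence of the previous lemma for $P=kG$; your reduction via additivity of the natural transformation $\rho_P$ and the direct-summand trick is exactly the standard way to supply that omitted step.
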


Note that since $U$ acts trivially on simple $kG$-modules, the natural map $R_k(G/U)\to R_k(G)$ is an isomorphism.  Denote by $X$ the $kG$-module $kU$ given above, with action from $G$ given by conjugating elements of $U$.

\begin{prop}
Let $B$ be the matrix representing the map $R_k(G)\to R_k(G)$ given on simples by $[S]\mapsto [X\otimes_k S]$.  Then
$$C(G)=B\cdot C(G/U).$$
\end{prop}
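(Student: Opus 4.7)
The plan is to apply Lemma \ref{filtration isomorphism for P} to compute $[P]$ in $R_k(G)$ for each indecomposable projective $kG$-module $P$, and then read off matrix entries.

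First I would set up the correspondence between projectives. Since $U$ is a normal $p$-subgroup, $U$ acts trivially on every simple $kG$-module, so $JkG$ annihilates all simples and hence $JkG \subseteq \rad(kG)$. Combined with Lemma \ref{Radical quotient isomorphism}, this shows that for any indecomposable projective $kG$-module $P$, the quotient $\overline{P} := P/JP \iso k[G/U]\otimes_{kG} P$ is an indecomposable projective $k[G/U]$-module with the same head as $P$. This gives a bijection $P \leftrightrightarrows \overline{P}$ between isomorphism classes of indecomposable projectives for $kG$ and for $k[G/U]$, compatible with the bijection on simples induced by $R_k(G/U)\iso R_k(G)$. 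This identification is what lets me compare the matrices $C(G)$ and $C(G/U)$.

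Second, I would use Lemma \ref{filtration isomorphism for P} to compute, in $R_k(G)$:
$$[P] \;=\; \sum_{i\geqslant 0}[J^iP/J^{i+1}P] \;=\; \sum_{i\geqslant 0}[J^i/J^{i+1}\otimes_k \overline{P}] \;=\; \Bigl[\Bigl(\bigoplus_{i\geqslant 0} J^i/J^{i+1}\Bigr)\otimes_k \overline{P}\Bigr].$$
The associated graded $\bigoplus_i J^i/J^{i+1}$ and $kU$ itself have the same composition factors as $kG$-modules (both filtrations refining to composition series give the same multiset), so in $R_k(G)$ we have $\bigl[\bigoplus_i J^i/J^{i+1}\bigr] = [X]$. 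Therefore
$$[P] \;=\; [X\otimes_k \overline{P}] \quad \text{in } R_k(G).$$

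Third, I would translate this identity into the asserted matrix equation. Writing $[\overline{P}] = \sum_T C(G/U)_{T,\overline{P}}\,[T]$ in $R_k(G/U)=R_k(G)$ and applying the $\Z$-linear map $[T]\mapsto [X\otimes_k T]$, whose matrix in the basis of simples is by definition $B$, gives
$$[P] \;=\; \sum_{T,T'} B_{T',T}\,C(G/U)_{T,\overline{P}}\,[T'].$$
Comparing with $[P]=\sum_{T'}C(G)_{T',P}[T']$ yields $C(G)_{T',P} = (B\cdot C(G/U))_{T',\overline{P}}$, which under the identification $P\leftrightarrow \overline{P}$ is exactly $C(G)=B\cdot C(G/U)$.

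The only mildly delicate step is the passage $\bigl[\bigoplus_i J^i/J^{i+1}\bigr]=[X]$, i.e.\ that the $G$-action by conjugation is compatible with the filtration and its associated graded; this follows because each $J^i$ is a $G$-submodule of $X=kU$ (as noted in the excerpt), so composition factors of the filtration quotients recover those of $X$. Everything else is formal bookkeeping once the bijection $P\leftrightarrow \overline{P}$ is in place.
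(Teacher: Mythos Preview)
Your proof is correct and follows essentially the same route as the paper: both arguments use Lemma~\ref{filtration isomorphism for P} to obtain $[P]=\bigl(\sum_i[J^i/J^{i+1}]\bigr)\cdot[P/JP]=[X]\cdot[\overline{P}]$ in $R_k(G)$ and then read off the matrix identity. You are simply more explicit than the paper about the bijection $P\leftrightarrow \overline{P}$ between indecomposable projectives for $kG$ and $k[G/U]$, which the paper uses implicitly when it writes $P_U$.
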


\begin{proof}
The $(S,T)$th entry of $B$ is the number of times $S$ appears as a composition factor of $X\otimes_k T$.  The $(T,P_U)$th entry of $C(G/U)$ is the number of times $T$ appears as a composition factor of $P_U$.  Thus, the $(S,P_U)$th entry of $B\cdot C(G/U)$ is
$$\sum_{T}(\#\hbox{ times }S\hbox{ a factor of }X\otimes T)\cdot(\#\hbox{ times }T\hbox{ a factor of }P_U).$$
Meanwhile, the $(S,P)$th entry of $C(G)$ is the number of times $S$ appears as a composition factor of $P$.  But from Lemma \ref{filtration isomorphism for P} we have
$$[P]=\left(\sum_{i\geqslant 0}[J^i/J^{i+1}]\right)\otimes [P_U],$$
and hence (since $\{J^i/J^{i+1}\}$ is a filtration of $X$),
\begin{align*}
C(G)_{S,P}  & =\sum_T(\#\hbox{ times }T\hbox{ a factor of }P_U)\cdot(\#\hbox{ times }S\hbox{ a factor of }X\otimes T) \\
            & =(B\cdot C(G/U))_{S,P_U}.
\end{align*}
\end{proof}

We return now to infinite groups.  Let $G=\invlim_{i\in \N} G/U_i$ be a countably based virtually pro-$p$ group, with the $U_i$ a descending chain of open normal pro-$p$ subgroups.  Note that there are finitely many simple $k\db{G}$-modules since $G$ is virtually pro-$p$ -- indeed, we have $R_k(G)\iso R_k(G/U_i)$.  For each $i$, let $B_i$ be the matrix $R_k(G)\to R_k(G)$ given by tensoring with the finite $k\db{G}$-module $k[U_i/U_{i+1}]$ (with $G$ acting by conjugation).  Let $C_i$ be the Cartan matrix of $G/U_i$.  By induction on $i$ and the discussion above we see
$$C_n = B_nB_{n-1}\hdots B_2 C_1.$$
For an arbitrary countably based pro-$p$ group it seems difficult to control the matrices $B_i$.  We can understand the situation better when $G$ is a compact analytic pro-$p$ group.  Such a profinite group $G$ is characterized by the existence of an open normal uniformly powerful pro-$p$ subgroup $U$ (for further details about such groups, see \cite{dixonetal}).

\begin{theorem}\label{Bi matrices equal}
Let $G$ be a compact analytic pro-$p$ group.  Then we can find an inverse system of finite groups $G=\invlim_{i\in \N} G/U_i$ for which the matrices $B_i$ are equal.
\end{theorem}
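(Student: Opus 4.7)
The plan is to take $U\leqslant G$ to be the open normal uniformly powerful pro-$p$ subgroup furnished by the characterisation of compact analytic pro-$p$ groups, and to set $U_i:=P_i(U)$, the $i$th term of the lower $p$-series of $U$. The essential input, drawn from the theory of uniform pro-$p$ groups in \cite{dixonetal}, is that the $p$th power map $x\mapsto x^p$ restricts to a bijection $P_i(U)\to P_{i+1}(U)$ for every $i\geqslant 1$, and in particular induces an abelian group isomorphism $P_i(U)/P_{i+1}(U)\iso P_{i+1}(U)/P_{i+2}(U)$ between the successive elementary abelian quotients.

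First I would check that $(U_i)$ is a valid inverse system of the kind required in the preceding discussion: each $P_i(U)$ is characteristic in $U$ and therefore normal in $G$; it is open because $U$ is a finitely generated pro-$p$ group; and $\bigcap_i P_i(U)=1$, so $G=\invlim_i G/U_i$. This step is routine.

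The main step is then to upgrade the $p$th power isomorphism to an isomorphism of $\F_p G$-modules, with $G$ acting by conjugation. This is immediate from the identity $(gxg^{-1})^p=gx^pg^{-1}$, which shows that $x\mapsto x^p$ commutes with the conjugation action of $G$. Extending scalars to $k$ yields $kG$-module isomorphisms $k[U_i/U_{i+1}]\iso k[U_{i+1}/U_{i+2}]$ for every $i\geqslant 1$, and therefore all of the $B_i$ coincide with the matrix of the endofunctor $(-)\otimes_k k[U/P_2(U)]$ on $R_k(G)$.

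The only genuine obstacle is obtaining the $p$th power isomorphism between consecutive lower $p$-series quotients; this is precisely the structural content of \emph{uniformity} in the sense of \cite{dixonetal}, and is the reason for the hypothesis. Once that is available, the $G$-equivariance is a one-line check and the theorem follows.
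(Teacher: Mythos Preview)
Your proposal is correct and matches the paper's own proof essentially line for line: both take the lower $p$-series $U_i=P_i(U)$ of a uniform open normal subgroup $U$, invoke the $p$-power isomorphism $U_i/U_{i+1}\iso U_{i+1}/U_{i+2}$ from \cite{dixonetal}, and verify $G$-equivariance via $(gxg^{-1})^p=gx^pg^{-1}$. Your version is in fact slightly more careful in justifying that the $U_i$ are open, normal in $G$, and have trivial intersection.
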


\begin{proof}
Let $U$ be a uniformly powerful open subgroup of $G$ and write $U_0=G, U_1=U, U_2=[U,U]U^p, U_{i+1}=[U_i,U](U_i)^p$.  We have $G=\invlim_{i\in \N} G/U_i$.  For each $i\geqslant 1$ we have an isomorphism $\rho:U_i/U_{i+1}\to U_{i+1}/U_{i+2}$ given by powering by $p$.  This isomorphism extends to an isomorphism of $k\db{G}$-modules $k[U_i/U_{i+1}]\to k[U_{i+1}/U_{i+2}]$ since on a basis element $uU_{i+1}$ of $U_i/U_{i+1}$ we have
$$\rho(g\cdot uU_{i+1})=\rho(gug^{-1}U_{i+1})=gu^pg^{-1}U_{i+2}=g\cdot\rho(uU_{i+1}).$$
Thus the maps $(k[U_i/U_{i+1}]\otimes_k -)$ and $(k[U_{i+1}/U_{i+2}]\otimes_k -)$ on $R_k(G)$ are equal, and hence the matrices $B_i$ and $B_{i+1}$ are equal.
\end{proof}

\subsection{Example}

Let $k=\F_9$, a sufficiently large field for $G_n=\SL_2(\Z/3^n\Z)$. The Cartan matrices for $G_1, G_2, G_3$ over $k$ are
$C(G_1) = \left(\begin{array}{ccc}
3   & 0 & 0 \\
0   & 1 & 0 \\
0   & 0 & 3
\end{array}\right),$
$C(G_2) = \left(\begin{array}{ccc}
27  &18 & 0 \\
18  &21 & 0 \\
 0  & 0 &81
\end{array}\right)$
and
$C(G_3) = \left(\begin{array}{ccc}
567 &540& 0 \\
540 &549& 0 \\
0   & 0 &2187
\end{array}\right)$.

Let $G=\SL_2(\Z_3)=\invlim_n G_n$.  From \cite[Theorem 5.2]{dixonetal} we know that the kernel $U_1$ of the natural map $G\onto G_1$ is an open normal uniformly powerful pro-$3$ subgroup of $G$, and we can take for $U_i$ the kernel of $G\onto G_i$.  Thus by Theorem \ref{Bi matrices equal} the matrices $B_i$ discussed above are equal.  From $C(G_1)$ and $C(G_2)$ we deduce that
$$B=C(G_2)C(G_1)^{-1}=
\left(\begin{array}{ccc}
9   &18 & 0 \\
6   &21 & 0 \\
0   & 0 & 27
\end{array}\right).$$

By diagonalizing $B$ we can now write down the general formula for $C(G_n) = B^{n-1}C(G_1)$:
$$C(G_n)=\frac{1}{4}
\left(\begin{array}{ccc}
3^{3n-2}+3^{n+1}    & 3^{3n-2}-3^{n}    & 0                 \\
3^{3n-2}-3^{n}      & 3^{3n-2}+3^{n-1}  & 0                 \\
0                   & 0                 & 4\cdot 3^{3n-2}
\end{array}\right).$$
The determinant of $C(G_n)$ is $3^{7n-5}$.

\section{Acknowledgements}

A big thanks to Jeremy Rickard for helpful conversations.

\end{document}